\pgfplotsset{compat=1.18}
	\newtheorem{theorem}{Theorem}[section]
	\newtheorem{lemma}[theorem]{Lemma}
	\newtheorem{corollary}[theorem]{Corollary}
	\newtheorem{proposition}[theorem]{Proposition}
	\theoremstyle{definition}
	\newtheorem{claim}[theorem]{Claim}
        \newtheorem{claim-proof}{Claim}
	\newtheorem{remark}[theorem]{Remark}
\begin{document}
	
\title{The $k$-distance mutual-visibility problem in graphs}

\author{Mart\'in Cera L\'opez$^{a}$\thanks{Email: \texttt{mcera@us.es}}
\and
Pedro Garc\'ia-V\'azquez$^{a}$\thanks{Email: \texttt{pgvazquez@us.es}}
\and
Juan Carlos Valenzuela-Tripodoro$^{b}$\thanks{Email: \texttt{jcarlos.valenzuela@uca.es}}
\and
Ismael G. Yero$^{b}$\thanks{Email: \texttt{ismael.gonzalez@uca.es}}
}

\maketitle

\begin{center}
$^a$ Departamento de Matem\'atica Aplicada I, Universidad de Sevilla, Spain \\
	\medskip
$^b$ Departamento de Matem\'aticas, Universidad de C\'adiz, Algeciras Campus, Spain
    \\
	\medskip
\end{center}
		
\begin{abstract}
The concept of mutual visibility in graphs, introduced recently, addresses 
a fundamental problem in Graph Theory concerning the identification of the 
largest set of vertices in a graph such that any two vertices have a shortest 
path connecting them, excluding internal vertices of the set. Originally 
motivated by some challenges in Computer Science related to robot navigation, 
the problem seeks to ensure unobstructed communication channels between 
navigating entities. The mutual-visibility problem involves determining a 
largest mutual-visibility set in a graph. The mutual-visibility number of a graph represents the cardinality of the largest mutual-visibility set. 
This concept has sparked significant research interest, leading to connections with 
classical combinatorial problems like the Zarankiewicz problem and Turán-type problems. 
In this paper, we consider practical limitations in network visibility and our 
investigation extends the original concept to $k$-distance mutual-visibility. In 
this case, a pair of vertices is considered $S$-visible if a shortest path of length 
at most $k$ exists, excluding internal vertices belonging to the set $S$. The 
$k$-distance mutual-visibility number represents the cardinality 
of a largest $k$-distance mutual-visibility set. We initiate the study of this new
graph parameter. We prove that the associate decision problem belongs to the 
NP-complete class. We also give some properties and tight bounds, as well as, the exact value of such parameter for some particular non trivial graph classes.
\end{abstract}

\noindent
{\bf Keywords:} $k$-distance mutual-visibility set; $k$-distance mutual-visibility number; 
mutual-visibility

\noindent
AMS Subj.\ Class.\ (2020): 05C12, 05C76

\section{Introduction}

Mutual-visibility in graphs relates to one recently introduced problem in graphs seeking for
the largest set of vertices $S$ in a graph such that for any two vertices $u,v\in S$ there is
some shortest $u,v$-path joining them, and whose internal vertices are not in $S$. The problem
was introduced in \cite{DiStefano}, and somehow originated from some computer science problem
related to a model for robot navigation in networks that requires the existence of a channel
(understood as a shortest path) between any two robots in the network that avoid
other navigating robots. For more details on related navigation models, we
suggest \cite{aljohani-2018a,bhagat-2020,Cicerone-2023+,diluna-2017,poudel-2021}.

Formally, given a connected graph $G$ and a set of vertices $S\subset V(G)$, it is said that
two vertices $x,y\in S$ are $S$-\textit{visible} if there is a shortest $x,y$-path (a geodesic) 
$P$ such that $P\cap S=\{x,y\}$. The set $S$ is called a \textit{mutual-visibility set} of $G$ 
if any two vertices of $S$ are $S$-visible. The cardinality of a largest mutual-visibility set 
of $G$ is the \textit{mutual-visibility number} of $G$, denoted by $\mu(G)$. The 
\textit{mutual-visibility problem} in graphs in that of finding a largest possible 
mutual-visibility set (equivalently, computing the mutual-visibility number). The concepts above
were first presented and studied in \cite{DiStefano}. Following this groundbreaking and recent 
research, a considerable amount of literature has since been produced on the subject. Moreover, 
several challenging problems have emerged, connecting the mutual-visibility problem with 
some classical combinatorial problems; for instance, the Zarankiewicz problem 
(see \cite{Cicerone-2023}) or some Tur\'an type problems (see \cite{Bujtas, Cicerone-2023+b}). 
Other interesting contributions to the problem are \cite{Axenovich-2024,Boruzanli,
Bresar,Cicerone-2023+a,Cicerone-hered,variety-2023,Cicerone-2022+,Korze-2023,kuziak-2023, tian-2023+}.

In this investigation, we consider the fact that ``visibility'' between entities in a network
could be frequently limited (with respect to the length of the geodesics considered) for
several reasons. In this sense, for a given integer $k\ge 1$ and a subset of vertices $S$ 
of a graph $G$, two vertices $x,y\in V(G)$ are called $S_k$-\textit{visible} if there is a
shortest $x,y$-path $P$ of length at most $k$ such that all the internal vertices of $P$ are
not in $S$. The set $S$ is a $k$-\textit{distance mutual-visibility set} if every two vertices
of $S$ are $S_k$-visible. Furthermore, the cardinality of a larger $k$-distance mutual-visibility
set in $G$ is the $k$-\textit{distance mutual-visibility number} of $G$, which is denoted
$\mu_k(G)$. Similarly as with the original concept, the $k$-\textit{distance mutual-visibility 
problem} stands for the problem of finding a largest possible $k$-distance mutual-visibility 
set (equivalently, computing the $k$-distance mutual-visibility number). In connection with
these concepts, given a graph $G$, we readily observe the following details:
\begin{itemize}
    \item For every $k\ge 1$, any $k$-distance mutual-visibility set is also a $(k+1)$-distance
    mutual-visibility set.
    \item The case $k=diam(G)$ coincides with the classical mutual-visibility concepts, where 
    $diam(G)$ represents the \textit{diameter} of $G$ (the length of a largest geodesic in $G$).
    \item If $k=1$, then a $1$-distance mutual-visibility set is precisely a clique of $G$ and 
    vice versa. Thus, the $1$-distance mutual-visibility number of $G$ is the same as the
    \textit{clique number} $\omega(G)$ of $G$, which stands for the cardinality of a largest set 
    of vertices inducing a complete graph.
    \item If $G$ is a graph with diameter $2$, then the $k$-distance mutual-visibility problem 
    can only be studied for $k=1$ and $k=2=diam(G)$. In this context, as both situations are already 
    established, our study will concentrate specifically on graphs with a diameter of at least $3$.
\end{itemize}
As a consequence of some of these facts, the following observation follows.

\begin{remark}\label{cadena}
For any graph $G$ of diameter $d$,
$$\omega(G)=\mu_1(G)\le \mu_2(G)\le \cdots \le \mu_{d}(G)=\mu(G).$$
\end{remark}

It is now our goal to present several contributions on the $k$-distance mutual-visibility 
problem in graphs of diameter at least three for any $k$ such that $2\le k\le diam(G)-1$.  

In Section 2 we prove some basic results. Section 3 is devoted to the study of the 
algorithmic complexity of the $k$-distance mutual-visibility problem. Next, in Section 4
we give the exact value of this new parameter, $\mu_k$, for several classes of graphs. 
Finally, we close this work in Section 5 by giving some general bounds in terms of 
structural parameters of the graph like the girth or the minimum and maximum degree.

\subsection{Some terminologies and notations}

Throughout our exposition, we shall use some extra notations that are next stated. 
For instance, we shall write $[n]=\{1,2,\dots, n\}$.  All the graphs considered are 
not directed, connected, simple, and without loops and multiple edges. The \textit{order} 
of $G=(V(G),E(G))$ is $n=|V(G)|$ and the \textit{size} is $m=|E(G)|$. For a given 
vertex $v\in V(G)$, we use $N_G(v)$, $N_G[v]=N_G(v)\cup\{v\}$, and $d_G(v)=|N_G(v)|$ 
as the \textit{open neighborhood}, the \textit{closed neighborhood}, and the \textit{degree} 
of $v$, respectively. Two vertices $u,v$ are called (true or false, resp.) twins in $G$ 
if ($N_G[u]=N_G[v]$ or $N_G(u)=N_G(v)$, resp.). Also, a vertex $u$ is called an 
\textit{extreme vertex} if $N_G[u]$ induces a complete graph.

We denote by $L(G)=\{x\in V(G)\ : \ d_G(v)=1\}$ the set 
of \textit{leaves} of $G$ and 
by $S(G)=\{x\in V(G): \text{ there exists } y\in L(G)\cap N_G(x)\}$ the set of \textit{support} 
vertices of $G$. A tree $T$ is called \textit{complete} if for all $i\in L(T)$ there exists 
$j\in L(T)$ such that $d_G(i,j)=diam(T)$. We define a \textit{perfect tree} as a complete tree in which all vertices, except the leaves, have the same degree.

Given a subgraph $H$ of $G,$ we denote $N_G(H)=\langle N_G(V(H))\rangle_G$ and 
${N}_G^r(H)=\langle \{i\in V(G)\ : \ d(i,H)\le r\} \rangle_G$ for $r\ge 2$ a positive integer.  
The \textit{girth} of $G$ represents the length of a shortest cycle in $G$. Also, if $G$ is a 
graph of diameter $d$ and $k\in [d]$ is an integer, then by $\mathcal{G}^k$ we represent 
the family of all induced isometric subgraphs of $G$ of diameter $k$. 

Given two graphs $G$ and $H$, the \textit{Cartesian} and \textit{strong product} of $G$ 
and $H$ are the graphs $G\Box H$ and $G\boxtimes H$, respectively, defined as follows.
\begin{itemize}
    \item Both $G\Box H$ and $G\boxtimes H$ have vertex set $V(G)\times V(H)$.
    \item The vertices $(g,h),(g',h')\in V(G)\times V(H)$ are adjacent in $G\Box H$ 
    if either $g=g'$ and $hh'\in E(H)$, or $h=h'$ and $gg'\in E(G)$.
    \item The vertices $(g,h),(g',h')\in V(G)\times V(H)$ are adjacent in $G\boxtimes H$ 
    if either they are adjacent in $G\Box H$, or $hh'\in E(H)$ and $gg'\in E(G)$.
\end{itemize}

The \emph{corona graph} $G\odot H$ of any two graphs $G$ and $H$, is the graph obtained 
from one copy of $G$ and $|V(G)|$ copies of $H$, by joining the $i$th vertex of $G$ 
to every vertex in the $i$th copy of $H$.

\section{Some basic results}

Given a graph $G$ of diameter $d$ and an integer $k$ with $1\le k\le d$, the definitions 
of the $k$-distance mutual-visibility concepts might suggest to consider studying ``only'' 
the subgraphs of $G$ of diameter $k$ so that in such subgraphs, we are ``only'' required 
to consider the classical mutual-visibility problem. However, although there exists some 
relationship, it is not exactly clear how this can be further used. We next describe several 
arguments to support our statement. To begin with, we first present a relationship between 
our parameter $\mu_k(G)$ in a graph $G$ and the classical mutual-visibility numbers of some 
induced subgraphs of $G$ of diameter $k$, but that it is only a result ``in one direction''.

\begin{theorem}
\label{th:subgraphs}
Let $G$ be a graph of diameter $d$. Then for any $k\in [d]$,
$$\mu_k(G)\ge \max\{\mu(H)\,:\,H\in\mathcal{G}^k\}.$$
\end{theorem}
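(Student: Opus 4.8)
The plan is to exhibit, for each $H\in\mathcal{G}^k$ realizing the maximum, a $k$-distance mutual-visibility set of $G$ of the same size as $\mu(H)$. Fix $H\in\mathcal{G}^k$ and let $S\subseteq V(H)$ be a mutual-visibility set of $H$ with $|S|=\mu(H)$. I claim that $S$, viewed as a subset of $V(G)$, is a $k$-distance mutual-visibility set of $G$; this immediately yields $\mu_k(G)\ge |S|=\mu(H)$, and taking the maximum over all $H\in\mathcal{G}^k$ gives the statement.

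The crucial tool is the fact that $H$ is isometric in $G$. First I would record the elementary observation that, because $H$ is an induced isometric subgraph of $G$, every geodesic of $H$ is also a geodesic of $G$: if $P$ is a shortest $x,y$-path in $H$, then its length equals $d_H(x,y)=d_G(x,y)$, and since the edges of $H$ are edges of $G$, $P$ is an $x,y$-path of minimum possible length in $G$, i.e.\ a geodesic of $G$.

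With this in hand, take any two vertices $x,y\in S$. By the definition of mutual-visibility in $H$, there is a shortest $x,y$-path $P$ in $H$ with $P\cap S=\{x,y\}$. By the observation above, $P$ is a geodesic of $G$. Its length is $d_H(x,y)\le diam(H)=k$, so $P$ is a shortest $x,y$-path of $G$ of length at most $k$. Moreover all internal vertices of $P$ lie in $V(H)$ and, since $P\cap S=\{x,y\}$, none of them belongs to $S$. Hence $x$ and $y$ are $S_k$-visible in $G$. As $x,y$ were arbitrary, $S$ is a $k$-distance mutual-visibility set of $G$.

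I do not expect a genuine obstacle here; the only point that must be handled with care is the transfer of geodesics, which relies essentially on the isometric condition. A merely induced subgraph would not suffice, since a shortest $x,y$-path in $G$ could use vertices outside $V(H)$, and an $H$-path could then fail to be geodesic in $G$. Everything else is a direct unravelling of the two definitions, together with the bound $diam(H)=k$ that guarantees the length restriction ``at most $k$'' is automatically met.
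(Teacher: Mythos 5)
Your proof is correct and follows exactly the paper's approach: transfer a maximum mutual-visibility set of an extremal $H\in\mathcal{G}^k$ into $G$, using the isometric condition to see that $H$-geodesics are $G$-geodesics of length at most $k$. The paper compresses this into a single ``it clearly follows'' step; you have simply made explicit the details it leaves to the reader.
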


\begin{proof}
Let $H\in \mathcal{G}^k$ such that $\mu(H)=\max\{\mu(H')\,:\,H'\in\mathcal{G}^k\}$, and 
let $S_H$ be a $\mu$-set of $H$. Since $H$ has diameter $k$ and is an isometric subgraph of $G$, 
it clearly follows that $S_H$ is also a $k$-distance mutual-visibility set of $G$. Thus, 
$\mu_k(G)\ge|S_H|=\max\{\mu(H)\,:\,H\in\mathcal{G}^k\}$.
\end{proof}

The equality in the bound above does not always happen. To see this consider, for instance, a cycle 
$C_{2r}$ with $r\ge 6$, which has diameter $r$. If $2r/3\le k\le r-1$, then every subgraph of $C_{2r}$ 
of diameter $k$ is a path, whose mutual-visibility number is $2$. However, one can easily see that a 
set of three vertices of $C_{2r}$ taken in such a way that the distances between any two of them are 
as equal as possible (roughly about $2r/3$) is a $k$-distance mutual-visibility set of $C_{2r}$.



We now focus on the inequality chain from Remark \ref{cadena}, in order to show several situations that 
can happen regarding the relationships between $\mu_k(G)$, $\mu(G)$, and the values of $k$. We first show 
that such a chain can be strict in all cases. To do so, we consider the strong product graph $P_r\boxtimes P_2$. 

\begin{proposition}
\label{prop:strong-P_r-P_2}
For any integer $r\ge 2$ and any $2\le k\le r-1$, $\mu_k(P_r\boxtimes P_2)=k+3$.
\end{proposition}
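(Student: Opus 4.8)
The plan is to exploit the simple metric structure of the strong product. Write the vertices of $P_r\boxtimes P_2$ as pairs $(i,j)$ with $i\in[r]$ and $j\in\{1,2\}$, and call the pair $\{(i,1),(i,2)\}$ the $i$-th \emph{column}; since $P_2$ has only two vertices, a column meets a set $S$ in $0$, $1$, or $2$ vertices, and I call it \emph{full} when both of its vertices lie in $S$. First I would record the standard strong-product distance formula $d((i,j),(i',j'))=\max\{d_{P_r}(i,i'),d_{P_2}(j,j')\}$, which equals $|i-i'|$ whenever $i\neq i'$ and equals $1$ when $i=i'$ but $j\neq j'$ (the two vertices of one column being adjacent). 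Next I would characterize geodesics: a shortest path between $(i,j)$ and $(i',j')$ with $i<i'$ has length $i'-i$ and must raise the first coordinate by exactly one at each step, so it passes through every intermediate column $c$ with $i<c<i'$ exactly once, while being free to use either vertex of that column. Hence such a pair is $S_k$-visible precisely when $i'-i\le k$ and no intermediate column is full: a full column blocks every geodesic, whereas each non-full column always offers a free vertex to route through, yielding an unobstructed geodesic.

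From this I would extract a clean necessary-and-sufficient description of a $k$-distance mutual-visibility set $S$. Letting $a$ and $b$ be the smallest and largest occupied columns, the first requirement is $b-a\le k$, because two vertices whose columns differ by more than $k$ are at distance more than $k$ and thus never $S_k$-visible. The second requirement is that no column strictly between $a$ and $b$ is full: a full interior column $c$ would block the ``straddling'' pair formed by any $S$-vertex in column $a$ and any $S$-vertex in column $b$, while full columns located at the extremes $a$ or $b$ are harmless since no pair of $S$-vertices straddles them. One checks that these two conditions are also sufficient, using the visibility characterization above.

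The upper bound is then pure counting. Writing $|S|$ as the number of occupied columns plus the number of full columns, the first term is at most $b-a+1\le k+1$ and the second is at most $2$ (only the two extreme columns may be full), so $|S|\le k+3$. For the matching lower bound I would exhibit the extremal set: since $k\le r-1$, I can fix a window of $k+1$ consecutive columns, place both vertices of each of the two end columns into $S$ and exactly one vertex of each of the $k-1$ interior columns into $S$. This set has $(k-1)+4=k+3$ vertices, spans exactly $k$ columns, and is full only at its ends, so by the characterization it is a $k$-distance mutual-visibility set; therefore $\mu_k(P_r\boxtimes P_2)=k+3$.

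The step I expect to demand the most care is the visibility characterization, and in particular the equivalence ``a cross-column pair is $S_k$-visible if and only if it spans at most $k$ columns and no intermediate column is full.'' The forward direction rests on the fact that \emph{every} geodesic between vertices in different columns must meet each intermediate column, so a single full intermediate column defeats all geodesics at once; the backward direction requires actually constructing an unblocked geodesic by choosing a free vertex in each intermediate column. Once this equivalence and the resulting two-condition description are established, both the counting upper bound and the explicit construction are routine.
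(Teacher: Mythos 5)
Your proposal is correct and follows essentially the same route as the paper: the same extremal set (two full end columns plus one vertex per interior column), and the same upper-bound argument that the span of occupied columns is at most $k+1$ while any full interior column would block every geodesic between vertices in the extreme columns. The only difference is presentational—you make explicit the strong-product distance formula and the fact that every geodesic crosses each intermediate column, which the paper treats as readily observed.
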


\begin{proof}
Assume $V(P_n)=[n]$ for any integer $n$. Let $S=\{(1,1),(2,1),\dots,(k+1,1)\}\cup\{(1,2),(k+1,2)\}$. It 
can be readily observed that the vertices of $S$ are pairwise at distance at most $k$. Moreover, for any 
two of such vertices, there is a geodesic joining them avoiding the remaining vertices of $S$. Thus, each 
two vertices of $S$ are $S_k$-visible, and so, $S$ is a $k$-mutual-visibility set of $P_r\boxtimes P_2$ 
of cardinality $k+3$.

On the other hand, let $S'$ be a $\mu_k$-set of $P_r\boxtimes P_2$. Let $i\in [r]$ be the smallest integer
with $(i,j)\in S'$ for some $j\in [2]$. Hence, the largest integer $i'\in [r]$ such that $(i',j')\in S'$ 
for some $j'\in [2]$ is at most $i+k$ (namely $i'\le i+k$). In addition, we observe the following two facts.
\begin{itemize}
    \item $|S'\cap \{(i,1),(i,2)\}|\le 2$ and $|S'\cap \{(i',1),(i',2)\}|\le 2$.
    \item For every $q\in \{i+1,\dots,i'-1\}$, it holds $|S'\cap \{(q,1),(q,2)\}|\le 1$. For otherwise, any 
    vertex $x\in S'\cap \{(i,1),(i,2)\}$ and any vertex $y\in S'\cap \{(i',1),(i',2)\}$ are not $S_k$-visible, 
    which is not possible.
\end{itemize}
As a consequence of the items above, since $i'\le i+k$, we deduce that 
$$\mu_k(P_r\boxtimes P_2)=|S'_k|\le 4+\sum_{q=i+1}^{q=i'-1} |S'\cap \{(q,1),(q,2)\}|\le 4+i'-1-i\le k+3.$$
Therefore, the desired equality follows.
\end{proof}

By using the result above, we see that there is a strict inequality in each of the positions from the chain 
of Remark \ref{cadena}. That is, for any $r\ge 2$ and any $1\le k\le r-1$, it holds
$$4=\omega(P_r\boxtimes P_2)=\mu_1(P_r\boxtimes P_2) < 5 = \mu_2(P_r\boxtimes P_2) <  
\cdots < r+2 = \mu_{d}(G)=\mu(P_r\boxtimes P_2).$$

The difference between each pair $\mu_k(G),\mu_{k+1}(G)$ in the inequality above is exactly one. In addition, 
note that the difference between any two of such pairs can be very large, as well. To see this, we only need
to consider the strong product graph $P_r\boxtimes K_s$ for any $s\ge 3$. Similar arguments as the ones used 
in Proposition \ref{prop:strong-P_r-P_2} allow us to prove the following result, which we include without proof.

\begin{proposition}
\label{prop:strong-P_r-K_s}
For any integers $r,s\ge 2$ and any integer $k$ such that $2\le k\le r-1$, $\mu_k(P_r\boxtimes K_s)=(k+1)(s-1)+2$.
\end{proposition}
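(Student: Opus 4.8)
The plan is to prove the two inequalities separately, closely following the template of Proposition~\ref{prop:strong-P_r-P_2}. First I would fix notation, identifying $V(P_r\boxtimes K_s)$ with $[r]\times[s]$ and calling $\{i\}\times[s]$ the \emph{$i$-th column}. Two structural facts drive everything: $(i,a)$ and $(i',a')$ are adjacent exactly when $|i-i'|\le 1$ and $(i,a)\neq (i',a')$, and hence $d\big((i,a),(i',a')\big)=|i-i'|$ whenever $i\neq i'$, while any two vertices in a common column are adjacent. In particular, for $i<i'$ every geodesic between a vertex of column $i$ and a vertex of column $i'$ has length $i'-i$ and meets each intermediate column $q$ (with $i<q<i'$) in exactly one vertex; this last observation is the crux of the whole argument.

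For the lower bound I would exhibit an explicit $k$-distance mutual-visibility set of the required size. Take both end columns $1$ and $k+1$ in full, together with the vertices $\{(q,a): 2\le q\le k,\ 1\le a\le s-1\}$, i.e. in every intermediate column $2,\dots,k$ omit only the fiber $a=s$. Counting gives $|S|=2s+(k-1)(s-1)=(k+1)(s-1)+2$. All pairs lie within $k+1$ consecutive columns, hence at distance at most $k$; and any two vertices $(i,a),(i',a')$ with $i<i'$ can be joined by the geodesic stepping through $(i+1,s),(i+2,s),\dots,(i'-1,s)$, whose internal columns are all intermediate and therefore contain no vertex of $S$ in the fiber $a=s$. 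Thus every pair is $S_k$-visible and $\mu_k(P_r\boxtimes K_s)\ge (k+1)(s-1)+2$.

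For the upper bound I would take a $\mu_k$-set $S'$, let $i$ and $i'$ be the smallest and largest column indices meeting $S'$, and note $i'\le i+k$ since two vertices in these columns must be $S_k$-visible, hence at distance at most $k$. Each of the two end columns contributes at most $s$ vertices. For an intermediate column $q$ with $i<q<i'$ I would argue $|S'\cap(\{q\}\times[s])|\le s-1$: if this column were entirely contained in $S'$, then picking $x\in S'$ in column $i$ and $y\in S'$ in column $i'$ forces every $x,y$-geodesic to pass through a vertex of column $q$ lying in $S'$, so $x$ and $y$ could not be $S_k$-visible, a contradiction. Summing, $|S'|\le 2s+(s-1)(i'-i-1)\le 2s+(s-1)(k-1)=(k+1)(s-1)+2$, matching the lower bound. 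The degenerate cases $i=i'$ or $i'=i+1$ give strictly smaller bounds and are handled the same way.

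The routine parts are the adjacency/distance computation and the final arithmetic. The one step that needs genuine care is the claim that every geodesic between columns $i$ and $i'$ visits each intermediate column exactly once; this follows because the projection onto the $P_r$ factor is a geodesic of the path that must increase its coordinate by exactly one at each step. It is this fact that simultaneously validates the ``escape through fiber $s$'' routing in the construction and the blocking argument in the upper bound, so I would isolate and state it carefully as a preliminary observation before splitting into the two bounds.
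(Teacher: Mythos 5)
Your proof is correct and takes essentially the approach the paper intends: the paper states this proposition without proof, remarking that it follows by arguments similar to those of Proposition~\ref{prop:strong-P_r-P_2}, and your argument is exactly that generalization (full end columns plus intermediate columns missing one fiber for the lower bound; the extremal-column and blocking-column count for the upper bound). The preliminary observation you isolate---that every geodesic between columns $i$ and $i'$ meets each intermediate column exactly once---is the same structural fact driving the paper's proof of Proposition~\ref{prop:strong-P_r-P_2}, so nothing further is needed.
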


In contrast to the comments above, there are also graphs for which there is an equality in all the values in 
the chain of Remark \ref{cadena}. To this end, note that a graph $G$ satisfying this fact must hold that 
$\mu(G)=\omega(G)$. It was proved in \cite{DiStefano} that if $G$ is a block graph with vertex set $V(G)=C\cup E$, 
where $C$ is the set of cut vertices and $E$ is the set of extreme vertices of $G$, then $\mu(G)=|E|$. 

In this sense, consider now $G_t$ is a block graph obtained in the following way. We begin with a complete 
graph $K_t$, $t\ge 3$, with vertex set $V(K_t)=[t]$. Next we add $t$ disjoint paths of order $r\ge 2$, 
say $P_r^{(1)}$, $\dots$, $P_r^{(t)}$. Finally, to obtain $G_t$, we add an edge between the vertex
$i\in V(K_t)$ and one leaf of the path $P_r^{i}$ (in some places such graphs are called sun graphs or 
bright sun graphs). It can be observed that $G_t$ is a block graph with $t$ extreme vertices and clique number 
$\omega(G_t)=t$. Moreover, $G_t$ has diameter $2r-1$, which can be as large as desired. Hence, by
using the results from \cite{DiStefano} we deduce that  $\mu(G_t)=t=\omega(G_t)$. 

By using these facts, we see that for any graph $G_t$, with $t\ge 3$ it holds, 
$$t=\omega(G)=\mu_1(G)=\mu_2(G)=\cdots =\mu_{d}(G)=\mu(G).$$

We next include some other property of the $k$-distance mutual-visibility sets of graphs that shall 
be later useful, now regarding the case of extreme vertices of the graph.

\begin{remark}
\label{rem:extreme}
Let $G$ be a graph, let $u,v$ are two adjacent extreme vertices, and let $S\subset V(G)$ be a $k$-mutual 
visibility set of the largest cardinality in $G$. Then $u\in S$ if and only if $v\in S$.
\end{remark}

\section{Complexity of the {\it k}-distance mutual-visibility problem}\label{sec:complex}

This section is focused on showing some complexity issues regarding the problem of finding the 
$k$-distance mutual-visibility number of graphs. That is, the following decision problem.

\begin{center}
\fbox{
	\parbox{0.9\textwidth}{
		{\sc$k$-distance mutual-visibility problem} \\
		\textit{Instance}: A graph $G$ of diameter $d\ge 3$, an integer $1\le k\leq d$, and 
                an integer $R$.\\
		\textit{Question}: Does $G$ has a $k$-distance mutual-visibility set of cardinality at least $R$?}}
\end{center}

To prove the complexity of the problem above we make use of the classical NP-complete problem 
regarding the independence number $\alpha(G)$ of a graph $G$, i.e., the cardinality of a largest
set of vertices of $G$ inducing an edgeless graph.

\begin{center}
\fbox{
	\parbox{0.6\textwidth}{
		{\sc independent set problem} \\
		\textit{Instance}: A graph $G$ and an integer $Q\ge 1$.\\
		\textit{Question}: Is it true that $\alpha(G)\geq Q$?}}
\end{center}

\begin{theorem}
For any integer $1\le k\leq d$, the {\sc $k$-distance mutual-visibility problem} is NP-complete for 
an arbitrary graph $G$ of order $n\ge 3$, size $m$ and diameter $d\ge 3$.
\end{theorem}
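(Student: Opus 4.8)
The plan is to establish membership in NP and then NP-hardness by a reduction from the \textsc{independent set problem}. For membership, a certificate is simply a vertex set $S$ with $|S|\ge R$; to verify it in polynomial time I would first compute all pairwise distances in $G$ by running a breadth-first search from every vertex. Then, for each pair $x,y\in S$, I would check that $d_G(x,y)\le k$ and that a geodesic avoiding $S$ internally exists; the latter is tested by deleting $S\setminus\{x,y\}$ from $G$ and verifying that the distance between $x$ and $y$ in the resulting graph still equals $d_G(x,y)$. A path realizing this distance is a shortest $x,y$-path in $G$ whose internal vertices lie outside $S$, so $x$ and $y$ are $S_k$-visible exactly when both tests succeed. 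Since there are $O(|S|^2)$ pairs and each test is a single breadth-first search, the whole verification runs in polynomial time.

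For hardness I would, given an instance $(G,Q)$ of \textsc{independent set} with $G=(V,E)$, build a graph $G'$ in which a designated family of \emph{selector} vertices $a_1,\dots,a_n$ represents $V$. The guiding principle is that adjacency in $G$ should translate into a visibility \emph{obstruction} between the corresponding selectors, while non-adjacency should leave them visible. Concretely, I would attach to each non-edge of $G$ a \emph{clear connector} placing the two selectors at mutual distance at most $k$ along a geodesic that no optimal solution needs to occupy, and attach to each edge of $G$ a \emph{blocker} gadget that forces a cut-type vertex lying on the unique short $a_i,a_j$-geodesic into every maximum solution; thus two adjacent selectors can never both be chosen. The lengths inside these gadgets would be scaled with $k$ so that the relevant distances fall below $k$ (for intended visibility) or so that the forced vertices of distinct gadgets stay farther than $k$ apart (keeping their contribution inert), and an auxiliary set of extreme vertices would pad the forced part of the optimum to a fixed constant $c$ independent of the chosen independent set. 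The target identity to prove is $\mu_k(G')=\alpha(G)+c$; setting $R=Q+c$ then yields the equivalence $\mu_k(G')\ge R \iff \alpha(G)\ge Q$. I would also append a short pendant path so that $G'$ has diameter at least $3$, and note that the degenerate case $k=1$ is already covered because $\mu_1=\omega$, so there the same scheme specializes to the \textsc{clique} problem on the complement.

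Proving the two directions of the correspondence is routine once the gadget behaves as intended: a maximum independent set yields a $k$-distance mutual-visibility set of size $\alpha(G)+c$ by taking the corresponding selectors together with the forced core, and conversely any maximum solution restricted to the selectors must be independent, since an edge would force a blocker onto the unique short geodesic joining its endpoints. The delicate point---and the step I expect to be the main obstacle---is the simultaneous design of the blocker and connector gadgets so that (i) every short geodesic between adjacent selectors is genuinely blocked, (ii) non-adjacent selectors retain a clear geodesic of length at most $k$, and (iii) the non-selector vertices entering an optimal solution always total the same constant $c$, with no opportunity to cheat by harvesting many gadget vertices instead of a large independent set. Guaranteeing (iii) uniformly across all values of $k$, rather than for a single convenient $k$, is what drives the distance scaling in the construction and is where the bulk of the verification will lie.
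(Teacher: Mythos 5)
Your NP-membership argument is fine and is in fact more detailed than the paper's: checking $d_G(x,y)\le k$ and that $d_{G-(S\setminus\{x,y\})}(x,y)=d_G(x,y)$ for every pair $x,y\in S$ is a correct polynomial-time test for $S_k$-visibility. The gap is in the hardness part. What you have written is a specification of what a reduction should accomplish --- selector vertices, blockers on edges, connectors on non-edges, a forced core of fixed size, the identity $\mu_k(G')=\alpha(G)+c$ --- but no actual construction: the gadgets are never defined, and you yourself flag conditions (i)--(iii), in particular the ``no cheating'' condition (iii), as the main unresolved obstacle. Since verifying exactly these properties is the entire mathematical content of the theorem (the paper spends two Claims plus a page of counting on it), the proposal as it stands is a plan for a proof, not a proof.

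Moreover, the one concrete mechanism you do commit to is problematic. You propose that adjacency of $i,j$ in $G$ be punished by a blocker forced onto ``the unique short $a_i,a_j$-geodesic''. But adjacent vertices are automatically $S_k$-visible (an edge has no internal vertices), so this only makes sense if your selectors $a_i,a_j$ are made non-adjacent in $G'$; then you must guarantee both that a forced vertex lies on \emph{every} $a_i,a_j$-geodesic of length at most $k$ (not just one), and that this vertex really belongs to every maximum solution --- all while keeping non-adjacent selectors visible and the gadget harvest constant. The paper sidesteps this entirely: it keeps the edges of $G$, lets \emph{all} selectors see each other through a universal vertex $w$, and encodes independence indirectly. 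It forces, via cliques of extreme vertices $A_{ij}$ (one per edge of $G$) and a tower of cliques $B_1,\dots,B_{d-1}$, a large core into every optimal solution, and observes that if two adjacent selectors $i$ and $j$ were both chosen, the forced vertices of $A_{ij}$ would lose every geodesic (each runs through $i$, through $j$, or through $w$, which also cannot be in the solution) to the forced $B$-vertices. Some mechanism of this third-party kind, together with the accompanying counting that pins down $\mu_k(G')$ exactly, is what your proposal still needs.
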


\begin{proof}
We first observe that the {\sc $k$-distance mutual-visibility problem} belongs to the NP class since it is 
polynomially possible to check on whether a given set of vertices of cardinality at least $R$ is a $k$-distance 
mutual-visibility set. Moreover, if $k=1$ ($k=d$ resp.), then as already mentioned, the $k$-distance 
mutual-visibility of a graph $G$ is indeed the clique number (mutual visibility, resp.) of $G$, whose NP-complexity 
is already known. In this sense, from now on we assume $G$ is a graph of diameter at least $3$ and that $d-1\ge k \ge 2$.

To describe a polynomial reduction from the {\sc independent set problem}, we begin with an arbitrary graph $G$ 
of order $n$, diameter $d$ and size $m$. 
Assume $V(G)=[n]$ and consider a new graph $G'$ constructed in the following way.
\begin{itemize}
  \item For each edge $e=ij$ of $G$, we add a clique of order $n$ with set of vertices $A_e=A_{ij}$ and the 
  edges $ix$ and $jx$ for every $x\in A_{e}$.
  \item We add an isolated vertex $w$, and all the possible edges $iw$, $xw$ for every $i\in V(G)$, and every 
  $x\in A_e$ with $e\in E(G)$.
  \item We add a set of $d-1$ disjoint cliques of order $n$ with set of
  vertices $B_1, \ldots, B_{d-1}$.
  \item We add all possible edges $ix$ for every $i\in V(G)$ and every $x\in B_1$.
  \item For any $i\in [d-2]$, we add all the possible edges $xy$ for every $x\in B_i$ and every $y\in B_{i+1}$.
\end{itemize}

Notice that the new graph $G'$ has diameter $d$ and that it can be constructed in polynomial time concerning 
the order of $G$. An example of the graph $G'$, for $G = P_5$, is given in Fig.~\ref{fig:G'}.

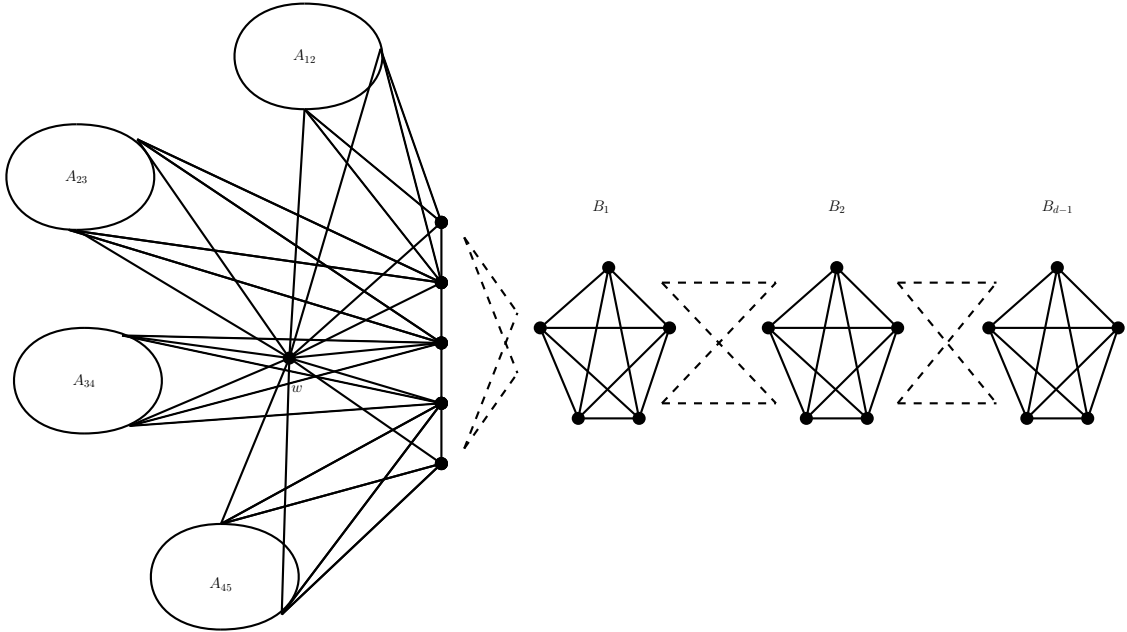
\begin{figure}[h]
\centering
\begin{tikzpicture}[scale=.4, transform shape, style=thick]
\tikzstyle{none}=[fill=white, draw=white, shape=circle]
\tikzstyle{black_V}=[fill=black, draw=black, shape=circle]
\tikzstyle{red_V}=[fill=red, draw=red, shape=circle]
\tikzstyle{blue_V}=[fill=blue, draw=blue, shape=circle]

\tikzstyle{red_E}=[-, draw=black, fill=black, thin]
\tikzstyle{dashed_E}=[-, dashed]
\tikzstyle{green_E}=[-, draw=black]
\tikzstyle{magenta_E}=[-, draw=black]
		\node [style={black_V}] (0) at (-1, 5) {};
		\node [style={black_V}] (1) at (-1, 3) {};
		\node [style={black_V}] (2) at (-1, 1) {};
		\node [style={black_V}] (3) at (-1, -1) {};
		\node [style={black_V}] (4) at (-1, -3) {};
		\node [style=none] (6) at (-5.5, 12.25) {};
		\node [style=none] (8) at (-5.5, 8.75) {};
		\node [style=none] (10) at (-5.5, 10.5) {\Large $A_{12}$};
		\node [style=none] (17) at (-8.25, -5) {};
		\node [style=none] (18) at (-8.25, -8.5) {};
		\node [style=none] (20) at (-12.75, 1.5) {};
		\node [style=none] (21) at (-12.75, -2) {};
		\node [style=none] (22) at (-12.75, -0.25) {\Large $A_{34}$};
		\node [style=none] (23) at (-13, 8.25) {};
		\node [style=none] (24) at (-13, 4.75) {};
		\node [style=none] (25) at (-13, 6.5) {\Large $A_{23}$};
		\node [style=none] (26) at (-6, 0.5) {};
		\node [style=none] (37) at (-8.25, -7) {\Large $A_{45}$};
		\node [style={black_V}] (92) at (4.5, 3.5) {};
		\node [style={black_V}] (93) at (2.25, 1.5) {};
		\node [style={black_V}] (94) at (3.5, -1.5) {};
		\node [style={black_V}] (96) at (5.5, -1.5) {};
		\node [style={black_V}] (97) at (6.5, 1.5) {};
		\node [style=none] (98) at (-5.75, -0.5) {\Large $w$};
		\node [style={black_V}] (99) at (-6, 0.5) {};
		\node [style={black_V}] (100) at (12, 3.5) {};
		\node [style={black_V}] (101) at (9.75, 1.5) {};
		\node [style={black_V}] (102) at (11, -1.5) {};
		\node [style={black_V}] (103) at (13, -1.5) {};
		\node [style={black_V}] (104) at (14, 1.5) {};
		\node [style={black_V}] (105) at (19.25, 3.5) {};
		\node [style={black_V}] (106) at (17, 1.5) {};
		\node [style={black_V}] (107) at (18.25, -1.5) {};
		\node [style={black_V}] (108) at (20.25, -1.5) {};
		\node [style={black_V}] (109) at (21.25, 1.5) {};
		\node [style={black_V}] (110) at (-1, 5) {};
		\node [style={black_V}] (111) at (-1, 3) {};
		\node [style={black_V}] (112) at (-1, 1) {};
		\node [style={black_V}] (113) at (-1, -1) {};
		\node [style={black_V}] (114) at (-1, -3) {};
		\node [style=none] (115) at (6.25, 3) {};
		\node [style=none] (116) at (10, 3) {};
		\node [style=none] (117) at (6.25, -1) {};
		\node [style=none] (118) at (10, -1) {};
		\node [style=none] (119) at (14, 3) {};
		\node [style=none] (120) at (17.25, 3) {};
		\node [style=none] (121) at (14, -1) {};
		\node [style=none] (122) at (17.25, -1) {};
		\node [style=none] (123) at (-0.25, 4.5) {};
		\node [style=none] (124) at (1.5, 2) {};
		\node [style=none] (125) at (-0.25, -2.5) {};
		\node [style=none] (126) at (1.5, 0) {};
		\node [style=none] (127) at (-3, 10.75) {};
		\node [style=black_V] (128) at (-1, 5) {};
		\node [style=none] (129) at (-5.5, 8.75) {};
		\node [style=black_V] (130) at (-1, 3) {};
		\node [style=none] (135) at (-8.25, -5) {};
		\node [style=black_V] (136) at (-1, -1) {};
		\node [style=none] (137) at (-6.25, -8) {};
		\node [style=black_V] (138) at (-1, -3) {};
		\node [style=none] (141) at (-11, 7.75) {};
		\node [style=black_V] (142) at (-1, 3) {};
		\node [style=none] (143) at (-13.25, 4.75) {};
		\node [style=black_V] (144) at (-1, 1) {};
		\node [style=none] (147) at (-11.5, 1.25) {};
		\node [style=black_V] (148) at (-1, 1) {};
		\node [style=none] (149) at (-11.25, -1.75) {};
		\node [style=black_V] (150) at (-1, -1) {};
		\node [style=none] (151) at (4.25, 5.5) {\Large $B_1$};
		\node [style=none] (152) at (12, 5.5) {\Large $B_2$};
		\node [style=none] (153) at (19.25, 5.5) {\Large $B_{d-1}$};
	
		\draw (0) to (1);
		\draw (3) to (2);
		\draw (3) to (4);
		\draw [bend right=90, looseness=2.25] (6.center) to (8.center);
		\draw [bend left=90, looseness=2.50] (6.center) to (8.center);
		\draw [bend right=90, looseness=2.25] (17.center) to (18.center);
		\draw [bend left=90, looseness=2.50] (17.center) to (18.center);
		\draw [bend right=90, looseness=2.25] (20.center) to (21.center);
		\draw [bend left=90, looseness=2.50] (20.center) to (21.center);
		\draw [bend right=90, looseness=2.25] (23.center) to (24.center);
		\draw [bend left=90, looseness=2.50] (23.center) to (24.center);
		\draw (94) to (97);
		\draw (97) to (96);
		\draw (96) to (94);
		\draw (94) to (92);
		\draw (92) to (93);
		\draw (93) to (96);
		\draw (93) to (97);
		\draw (93) to (94);
		\draw (92) to (97);
		\draw (92) to (96);
		\draw (1) to (2);
		\draw (102) to (104);
		\draw (104) to (103);
		\draw (103) to (102);
		\draw (102) to (100);
		\draw (100) to (101);
		\draw (101) to (103);
		\draw (101) to (104);
		\draw (101) to (102);
		\draw (100) to (104);
		\draw (100) to (103);
		\draw (107) to (109);
		\draw (109) to (108);
		\draw (108) to (107);
		\draw (107) to (105);
		\draw (105) to (106);
		\draw (106) to (108);
		\draw (106) to (109);
		\draw (106) to (107);
		\draw (105) to (109);
		\draw (105) to (108);
		\draw [style={dashed_E}] (115.center) to (116.center);
		\draw [style={dashed_E}] (115.center) to (118.center);
		\draw [style={dashed_E}] (117.center) to (118.center);
		\draw [style={dashed_E}] (117.center) to (116.center);
		\draw [style={dashed_E}] (119.center) to (120.center);
		\draw [style={dashed_E}] (119.center) to (122.center);
		\draw [style={dashed_E}] (121.center) to (122.center);
		\draw [style={dashed_E}] (121.center) to (120.center);
		\draw [style={dashed_E}] (123.center) to (124.center);
		\draw [style={dashed_E}] (123.center) to (126.center);
		\draw [style={dashed_E}] (125.center) to (126.center);
		\draw [style={dashed_E}] (125.center) to (124.center);
		\draw [style={green_E}] (127.center) to (128.center);
		\draw [style={green_E}] (127.center) to (130.center);
		\draw [style={green_E}] (129.center) to (130.center);
		\draw [style={green_E}] (129.center) to (128.center);
		\draw [style={green_E}] (135.center) to (136.center);
		\draw [style={green_E}] (135.center) to (138.center);
		\draw [style={green_E}] (137.center) to (138.center);
		\draw [style={green_E}] (137.center) to (136.center);
		\draw [style={green_E}] (141.center) to (142.center);
		\draw [style={green_E}] (141.center) to (144.center);
		\draw [style={green_E}] (143.center) to (144.center);
		\draw [style={green_E}] (143.center) to (142.center);
		\draw [style={green_E}] (147.center) to (148.center);
		\draw [style={green_E}] (147.center) to (150.center);
		\draw [style={green_E}] (149.center) to (150.center);
		\draw [style={green_E}] (149.center) to (148.center);
		\draw [style={green_E}] (111) to (99);
		\draw [style={green_E}] (99) to (110);
		\draw [style={green_E}] (112) to (99);
		\draw [style={green_E}] (99) to (113);
		\draw [style={green_E}] (114) to (99);
		\draw [style={green_E}] (99) to (135.center);
		\draw [style={green_E}] (137.center) to (99);
		\draw [style={green_E}] (99) to (147.center);
		\draw [style={green_E}] (99) to (24.center);
		\draw [style={green_E}] (99) to (141.center);
		\draw [style={green_E}] (99) to (149.center);
		\draw [style={green_E}] (99) to (129.center);
		\draw [style={green_E}] (127.center) to (99);
		\draw [style={red_E}] (127.center) to (128.center);
		\draw [style={red_E}] (142.center) to (129.center);
		\draw [style={red_E}] (129.center) to (128.center);
		\draw [style={red_E}] (142.center) to (127.center);
		\draw [style={red_E}] (148.center) to (147.center);
		\draw [style={red_E}] (147.center) to (150.center);
		\draw [style={red_E}] (150.center) to (149.center);
		\draw [style={red_E}] (149.center) to (148.center);
		\draw [style={magenta_E}] (141.center) to (142.center);
		\draw [style={magenta_E}] (142.center) to (143.center);
		\draw [style={magenta_E}] (143.center) to (148.center);
		\draw [style={magenta_E}] (148.center) to (141.center);
		\draw [style={magenta_E}] (150.center) to (135.center);
		\draw [style={magenta_E}] (135.center) to (138.center);
		\draw [style={magenta_E}] (138.center) to (137.center);
		\draw [style={magenta_E}] (137.center) to (150.center);
	
\end{tikzpicture}
\caption{Construction of the graph $G'$ from $G=P_5$ that has diameter $d=4$.}\label{fig:G'}
\end{figure}

We first consider $k$ as an integer such that $2\le k\le d-1$. Assume $I$ is an independent set of $G$ of the 
largest cardinality, \emph{i.e.},  $|I|=\alpha(G)$. Let $b_j\in B_j$ be a fixed vertex of each $B_j$ with $j\in [d-1]$.
We claim that the set $S$ given next is a $k$-distance mutual-visibility set of $G'$ (we may recall that the 
sets of the second part in the union do not exist when $k=2$).
$$ 
 S=I \cup \left(\bigcup_{e\in E(G)} A_e \right)\cup \left(\bigcup_{j\in [k-2]} (B_j\setminus\{b_j\}) \right)\cup B_{k-1}.
$$

\begin{claim-proof} The following straightforward properties of $S$ and $G'$ hold.
\label{cl:proof}
\begin{itemize}
    \item[(a)] The set $S$ has cardinality $n(m+k-1)+\alpha(G)-k+2.$
    \item[(b)] The vertices in $S$ are pairwise at distance at most $k.$
    \item[(c)] The vertex $w$ does not belong to any geodesic joining a vertex of $\bigcup A_{ij}$ with a 
    vertex of $\bigcup B_j$.
    \item[(d)] The geodesics joining two vertices $x,y\in \bigcup A_{ij}$ are either $xy$, $xwy$, or 
    either $xiy$ with $i\in V(G)$.
\end{itemize}
\end{claim-proof}

Now, to prove our claim (that the set $S$ given above is a $k$-distance mutual-visibility set of $G'$), we 
consider all the possible pairs of vertices $u,v\in S$ and prove that each of these pairs is $S_k$-visible.
Clearly, if $u,v$ are adjacent, then they are $S_k$-visible, and so, from now on we only remark the situations
where $u$ and $v$ are not adjacent. To prove the next situations, we make use of Claim \ref{cl:proof}.

\begin{enumerate}
\item If $u\in A_e$ and $v\in A_{f}$ for two edges $e,f\in E(G)$, then $u,v$ are $S_k$-visible through the geodesic $u w v$.
\item If $u\in A_e$, $v\in V(G)$ and $v\notin e$, then again $u,v$ are $S_k$-visible through the geodesic $u w v$.
\item If $u\in A_e=A_{ij}$ and $v\in B_r$ for some $r\in [k-1]$, then since $I$ is an independent set, at least 
one of the vertices $i$ or $j$ of the edge $e=ij$, say w.l.g. $i$, is not in $I$. If $r=1$, then $u,v$ are 
$S_k$-visible through the geodesic $uiv$. Also, if $r\ge 2$, then $u,v$ are $S_k$-visible through the 
geodesic $u i b_1\cdots b_{r-1}v$.
\item If $u,v\in V(G)$, then $u,v$ are $S_k$-visible through the geodesic $u w v$ (notice that $u,v$ are 
not adjacent according to the definition of $S$).
\item If $u\in V(G)$ and $v\in B_r$ for some $r\in [k-1]$, then $u,v$ are $S_k$-visible through the geodesic 
$u b_1\cdots b_{r-1}v$ (notice that the case $v\in B_1$ is already considered since in such situation $u,v$ would be adjacent).
\item If $u\in B_r$ and $v\in B_s$ for some $r,s\in [k-1]$, with $r<s,$ 
then $u,v$ are $S_k$-visible through the geodesic $u b_{r+1}\cdots b_{s-1}v$.
\end{enumerate}

The described situations include all possible pairs of not adjacent vertices of $G'$ showing that such $S$ is 
a $k$-distance mutual-visibility set of $G'$. Since $|S|=n(m+k-1)+\alpha(G)-k+2,$ we deduce that
\begin{equation}\label{eq:compl-1}
\mu_k(G')\ge n(m+k-1)+\alpha(G)-k+2.
\end{equation}

On the other hand, let $S'$ be a $\mu_k$-set of $G'$ of cardinality
$|S'|\ge n(m+k-1)+\alpha(G)-k+2.$ 
\begin{claim-proof} The following straightforward properties of $S'$ hold.
\label{cl:proof-2}
\begin{itemize}
  \item[(a)] $S'\cap \left( \bigcup A_{ij}  \right) \neq \emptyset$.: That is, since $m>d$ and 
  $2\le k \le d-1 \le n-2$, it holds $|S'| \ge nm+\alpha+(k-1)n-k+2 > nd+1+n-(n-2)+2=nd+5$, and therefore we deduce the conclusion. 
  \item[(b)] $\left( \bigcup A_{ij}\cup \{w\} \cup V(G) \right) \not\subseteq S'$. Suppose to the contrary   
  that $\left( \bigcup A_{ij}\cup \{w\} \cup V(G) \right) \subseteq S'$. Since $n\ge 4$ and $diam(G)\ge 3,$   
  there exist $e,f \in E(G)$ with $e\neq f$. Let $x\in A_e$ and $y\in A_f.$ If $e=ij$ and $f=ih$ are incident 
  edges on the vertex $i\in V(G),$ then by Claim \ref{cl:proof} (d) the geodesics joining the vertices $x$ and   
  $y$ are $xwy$ or $xiy,$ but this is not possible since $\{w\} \cup V(G) \subseteq S'.$ If $e$ and $f$ are not 
  incident, then the geodesic joining $x$ and $y$ is $xwy$ and this is not possible because $w\in S'$.
  
  \item[(c)] $A_{ij}\subseteq S'.$ If $k=2$, then $|S'|\ge nm+n+1$ and if $3\le k \le d-1 \le n-2$, then 
  $|S'|\ge nm+3+2n-(n-2) \ge nm+n+5.$ In any case, from item (b) we derive that $S' \cap \left(\bigcup B_r \right) \neq \emptyset.$
  
  \item[(d)] $S'\cap A_{ij} = A_{ij}$ for every $ij\in E(G)$. First note that any two vertices belonging to 
  a set $A_{ij}$ for some $ij\in E(G)$ are extreme vertices. Hence, by Remark \ref{rem:extreme}, 
  $S'\cap A_{ij} \neq \emptyset$   implies that  $S'\cap A_{ij} = A_{ij}$. Also, if there is a set $A_e$ (with $e=ij$) 
  such that $A_e\cap S'=\emptyset$, then it must happen that $i,j\in S'$. Otherwise, as $S'\cap \left( \bigcup B_{r} 
  \right) \neq \emptyset,$ we can consider $S''=S' \cup A_{ij}$ which would be a larger $k$-distance mutual-visibility 
  set of $G'$, which is not possible. However, in such case, since $n\ge 3$, it holds that the set 
  $S''=S'\setminus\{i,j\}\cup A_e$ is also a $k$-distance mutual-visibility set of $G'$ of cardinality larger 
  than $\mu_k(G')$, which is again not possible. 
\end{itemize}
\end{claim-proof}

As a consequence of the items above, we deduce that $ \bigcup_{e\in E(G)} A_e \subseteq S'$.  Due to the 
$k$-distance mutual-visibility property in $S'$, the largest index $r$ such that it could happen 
$B_r\cap S'\ne \emptyset$ is at most $k-1$. Thus, for every $r\ge k$, we have that $B_r\cap S'=\emptyset$.

On the other hand, assume $r'$ is the largest index such that $B_{r'}\cap S'\ne \emptyset$. Hence, for 
every $r<r'$, it must happen that $|B_r\cap S'| < |B_r|=n$, for otherwise, the vertices from $B_{r'}$ 
will not be $S_k$-visible with the vertices from $ \bigcup_{e\in E(G)} A_e$, which is not possible. 
Consequently, $\left|S' \cap \left(\bigcup_{r\in [r']} B_r\right)\right|\le nr'-r'+1$.

Now, since $\bigcup_{e\in E(G)} A_e \subseteq S'$ and, by the fact 
that there is at least one set  $B_r$ such that $B_r\cap S'\ne \emptyset$,
it must hold that $S'\cap V(G)$ forms an independent set (and so $|S'\cap V(G)|\le \alpha(G)$). For otherwise,
there will be a pair of adjacent vertices $ij\in S'\cap V(G)$, and so, 
each vertex $x\in A_{ij}$ will not be $S_k$-visible with each vertex 
from every set $B_r$, and this is a contradiction. 

In addition to the comments above, notice also that the vertex $w$ does 
not belong to $S'$. Otherwise, since $G$ has a diameter of at least $3$, 
there will be at least two edges $e,f\in E(G)$ with no common end-vertices. 
Hence, each pair of vertices $x\in A_e$ and $y\in A_f$ are not $S_k$-
visible, and this is not possible.

As a conclusion of all the arguments above we deduce the following
$$ 
\begin{array}{rcl}
  \mu_k(G')= |S'|  & = &  
  |S' \cap V(G)| + \left|S'\cap \left(\bigcup_{ij\in E(G)} A_{ij}\right)\right|+ 
  \left|S'\cap \left(\bigcup_{r\in [d-1]} B_{r}\right) \right| 
  \\[1em]
  & \le &  \alpha(G)+nm+(k-2)(n-1)+n
  \\[1em]
  & \le &  n(m+k-1)+\alpha(G)-k+2.
\end{array}
$$

Therefore, by using \eqref{eq:compl-1}, we deduce the equality $\mu_k(G')=n(m+k-1)+\alpha(G)-k+2$. 
Finally, by making $R=n(m+k-1)+Q-k+2$, for any integer $k$ such that $2\le k\le d-1$, it is readily 
seen that $\mu_k(G')\geq R$ if and only if $\alpha(G)\geq Q$, which completes the reduction of the 
{\sc independent set problem} to our {\sc $k$-distance mutual-visibility problem} for any integer 
$k$ such that $2\le k\le d-1$.
\end{proof}

As a consequence of the reduction above, we obtain that the problem of computing the $k$-distance 
mutual-visibility number of graphs is NP-hard in general. It makes sense then, to consider families 
of graphs in which an efficient solution to this problem can be obtained.

\section{{\it k}-distance mutual-visibility on particular graph classes}

Based on the NP-hardness of computing the $k$-distance mutual-visibility number of graphs, we centre 
our attention in this section into computing such value for several non-trivial families of graphs.

\begin{proposition}
Let $k\ge 1$ and $n\ge 3$ be integers. Then
$$ \begin{array}{cc}
   \mu_k(P_n)=2,  & \mu_k(C_n)= \left\{
        \begin{array}{cl}
           3; & \text{if $n\le 3k$}; \\[.1em]
           2; & \text{otherwise}. 
   \end{array} \right.
\end{array} $$
for all $k$ less than or equal to the diameter of the corresponding graph.
\end{proposition}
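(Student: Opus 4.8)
The plan is to treat the two families separately and to reduce every visibility question to the elementary geometry of arcs, settling the lower and upper bounds in each case.

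For the path $P_n$ with $V(P_n)=\{1,\dots,n\}$, I would first dispatch the lower bound: any two adjacent vertices lie at distance $1\le k$ and the edge joining them is a geodesic with no internal vertex, so $\mu_k(P_n)\ge 2$. For the upper bound I would exploit the fact that $P_n$ has a \emph{unique} geodesic between any pair of vertices. Given three vertices $a<b<c$, the unique $a,c$-geodesic is $a,a+1,\dots,c$, which contains $b$ as an internal vertex; hence $a$ and $c$ can never be $S_k$-visible whenever $\{a,b,c\}\subseteq S$. Thus no three-element set is a $k$-distance mutual-visibility set and $\mu_k(P_n)=2$.

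For the cycle $C_n$ the argument is organized around the observation that every pair of vertices is joined by exactly two arcs, and a shortest path must be (one of) the shorter arc(s). I would first prove the universal upper bound $\mu_k(C_n)\le 3$: given four vertices $v_1,v_2,v_3,v_4$ in cyclic order, the two arcs between $v_1$ and $v_3$ pass respectively through $v_2$ and $v_4$, so every $v_1,v_3$-path meets $S$ internally and the pair $v_1,v_3$ is not $S_k$-visible. The lower bound $\mu_k(C_n)\ge 2$ again follows from any single edge. It then remains only to decide exactly when a triple works.

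The heart of the proof is the analysis of a triple $\{a,b,c\}$. Writing $p,q,r$ for the three arc lengths (so $p+q+r=n$), I would show that $a$ and $b$ are $S_k$-visible if and only if $p\le q+r$ and $p\le k$: if $p\le q+r$, the arc of length $p$ avoids $c$ and is a geodesic, visible precisely when $p\le k$; if $p>q+r$, the only geodesic runs through $c$ and visibility fails. Applying this to all three pairs shows that the triple is a $k$-distance mutual-visibility set if and only if $p,q,r\le k$, the inequalities $p\le q+r$, $q\le p+r$, $r\le p+q$ being automatic since $k\le\lfloor n/2\rfloor$ forces each arc below $n/2$. Because $p+q+r=n$, three arcs each of length at most $k$ exist if and only if $n\le 3k$ (choosing them as balanced as possible, so that each equals at most $\lceil n/3\rceil\le k$). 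This yields $\mu_k(C_n)=3$ when $n\le 3k$ and $\mu_k(C_n)=2$ otherwise.

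The main obstacle, and really the only place demanding care, is this triple analysis: one must correctly identify which arc carries the shortest path and verify that requiring each pair to be visible forces \emph{all three} arc lengths to be at most $k$, rather than merely the three geodesic distances. Once the equivalence ``triple is valid $\iff$ all arcs have length $\le k$'' is secured, translating it into the threshold $n\le 3k$ is a routine counting step.
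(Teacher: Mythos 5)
Your proof is correct and takes essentially the same route as the paper's: the four-vertex obstruction for the upper bound of $3$ and the arc-length accounting for triples are exactly the paper's ingredients. The only differences are organizational --- the paper settles the path case by squeezing $\omega(P_n)\le\mu_k(P_n)\le\mu(P_n)=2$ via Remark~\ref{cadena}, and argues the case $n\ge 3k+1$ by contradiction rather than through your unified characterization ``triple valid $\iff$ all three arcs have length $\le k$'' (which, as a bonus, fills in the lower-bound detail for $n\le 3k$ that the paper leaves to the reader).
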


\begin{proof}
It is straightforward to verify the result for path graphs by 
considering the inequalities pointed out in Remark \ref{cadena}, 
since the clique number of a path matches the value of $\mu(P_n)=2$.

Let $C_n:v_1v_2\ldots v_n$ be a cycle graph. Assume first that $n\ge 3k+1$. Clearly, $\mu_k(C_n)\ge 2$. 
To prove the other inequality we proceed by contradiction.
Let us suppose that there are three vertices, $\{x_i,x_j,x_l:i<j<l\}$, belonging to a $k$-distance 
mutual-visibility set $P$ in $C_n$. Let $C_{i,l}$ be the subgraph of $C$ (which is a path) with 
end-vertices $x_i,x_l$ passing through $x_j$. Since $x_i,x_j$ are $P_k$-visible (and so are $x_j,x_l$), 
it holds that the length of $C_{i,l}$ is at most $2k$. Therefore, the ``other part of the cycle $C_n$'', 
the subgraph induced by $V(C_n)\setminus V(C_{i,l})\cup \{x_i,x_l\}$ is a path of length at least $k+1$, 
which implies that $x_i,x_l$ are not $P_k$-visible, which is a contradiction. So, any $k$-distance 
mutual-visibility set in $C$ must have cardinality at most $2$, and thus the desired equality follows for $n\ge 3k+1$.

On the other hand, assume now that $n\le 3k$. First, it is not difficult to check that $\mu_k(C_n)\ge 3$ 
by just taking a set of three vertices of $C_n$ being pairwise as equidistant as possible. Also, if a 
$k$-distance mutual-visibility set contains four vertices $\{x_i,x_j,x_l,x_m: i<j<l<m\}$, then any shortest 
path joining $x_i$ with $x_l$ must contain either $x_j$ or $x_m$, which is a contradiction. Therefore, 
$\mu_k(C_n)\le 3$, and the equality follows for  $n\le 3k$.
\end{proof}

We now continue in this section with studying the case of trees. To this end, notice that if $n\ge 2$ 
is an integer and $S_n$ is a star on $n+1$ vertices, then since $S_n$ is a tree with 
$diam(S_n)=2$, from \cite{DiStefano} we have that $\mu_2(S_n)=\mu(S_n)=|{L}(S_n)|=n$. Thus, from 
now on, only trees different from stars are considered.

\begin{proposition} {\label{prop:bistar}}
Let $m\ge n\ge 1$ be integers and let $S_{n,m}$ be a bistar. Then $\mu_2(S_{n,m})=m+1$ 
and $\mu_3(B_{n,m})=n+m.$ 
\end{proposition}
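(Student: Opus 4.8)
First I would fix notation for the bistar. Write $u,v$ for the two adjacent central vertices, with $u$ joined to the leaves $a_1,\dots,a_n$ and $v$ joined to the leaves $b_1,\dots,b_m$; set $L_u=\{a_1,\dots,a_n\}$ and $L_v=\{b_1,\dots,b_m\}$, so $V(S_{n,m})=\{u,v\}\cup L_u\cup L_v$ has order $n+m+2$. The plan begins by recording the full distance table, since everything follows from it: $d(a_i,a_j)=d(u,b_j)=d(a_i,v)=d(b_i,b_j)=2$, $d(a_i,b_j)=3$, and all remaining relevant pairs ($u\!v$, $u a_i$, $v b_j$) are edges. In particular $\mathrm{diam}(S_{n,m})=3$, attained only by pairs $a_i,b_j$. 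Because $S_{n,m}$ is a tree, every geodesic is unique, which is what makes the visibility conditions rigid and easy to analyze.

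\textbf{The value $\mu_3$.} Since $\mathrm{diam}(S_{n,m})=3$, Remark~\ref{cadena} gives $\mu_3(S_{n,m})=\mu(S_{n,m})$, so it suffices to compute the ordinary mutual-visibility number. A bistar is a tree, hence a block graph whose extreme vertices are exactly its $n+m$ leaves and whose cut vertices are exactly $\{u,v\}$. By the result of \cite{DiStefano} quoted earlier (for a block graph, $\mu$ equals the number of extreme vertices), I would conclude $\mu(S_{n,m})=n+m$, and therefore $\mu_3(S_{n,m})=n+m$. For completeness I would also note the direct argument: $L_u\cup L_v$ is a mutual-visibility set via the unique geodesics $a_iua_j$, $b_ivb_j$, $a_iuvb_j$, while adding any center blocks a pair through the unique path, so $n+m$ cannot be exceeded.

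\textbf{The value $\mu_2$: lower bound and the key obstruction.} For the lower bound I would exhibit $S=L_v\cup\{u\}$ of size $m+1$ and verify it is a $2$-distance mutual-visibility set: the pairs $b_i,b_j$ and $u,b_j$ are at distance $2$ with the unique geodesic passing through $v\notin S$, hence $S_2$-visible, and all other pairs are adjacent. The crucial structural fact driving the upper bound is that $d(a_i,b_j)=3>2$, so no pair consisting of one $L_u$-leaf and one $L_v$-leaf can ever be $S_2$-visible; consequently any $2$-distance mutual-visibility set avoids $L_u$ entirely or avoids $L_v$ entirely.

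\textbf{The value $\mu_2$: upper bound.} Let $S$ be any $2$-distance mutual-visibility set. By the obstruction above I split into the case $S\cap L_u=\emptyset$ and the symmetric case $S\cap L_v=\emptyset$. In the first case $S\subseteq\{u,v\}\cup L_v$, so $|S|\le m+2$; to rule out $m+2$ I would use that $\{u,v\}\cup L_v$ fails, since whenever $v\in S$ together with two distinct vertices of $\{u\}\cup L_v$ at distance $2$ through $v$ (such as $b_i,b_j$, or $u,b_j$), visibility is destroyed, forcing $|S|\le m+1$. In the symmetric case $S\subseteq\{u,v\}\cup L_u$ gives $|S|\le n+1\le m+1$ by the hypothesis $m\ge n$. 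Combining the two cases yields $\mu_2(S_{n,m})\le m+1$, matching the lower bound. The main point requiring care — the one I expect to be the only real obstacle — is the bookkeeping in this upper bound: making sure the distance-$3$ obstruction is invoked correctly to separate the two leaf classes, checking that $m\ge n$ is what makes the $L_v$-side dominant, and confirming the small cases ($m=1$, and $n=1$) are covered by the same inequalities rather than needing separate treatment.
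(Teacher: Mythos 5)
Your proof is correct and follows essentially the same route as the paper: the identical lower-bound set $N(v)=L_v\cup\{u\}$ of size $m+1$, and an upper bound driven by the observation that leaves on opposite sides are at distance $3>2$, so any $2$-distance mutual-visibility set must avoid one leaf class entirely, after which the centers are excluded and the hypothesis $m\ge n$ makes the $L_v$-side dominant. If anything, your explicit elimination of the candidate sets of sizes $m+2$ and $n+2$ (blocking pairs whose unique geodesic passes through a center in the set) is tidier than the paper's own write-up, which leaves the exclusion of the center adjacent to the $m$ leaves implicit.
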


\begin{proof}
Note that $diam(S_{n,m})=3.$ Hence, from \cite{DiStefano}, $\mu_3(S_{n,m})=\mu(S_{n,m})=|{L}(S_{n,m})|=n+m$.   
Now, we prove that $\mu_2(S_{n,m})=m+1$. Let $B(S_{n,m})=\{x,y\}$ where $d_{S_{n,m}}(x)=m+1$ and $d_{S_{n,m}}(y)=n+1.$  
It is easy to check that $S=N_{S_{n,m}}(x)$ is a $2$-distance mutual-visibility set of $S_{n,m}$ with $|S|=m+1$, 
and so $\mu_2(S_{n,m})\ge m+1.$

On the other hand, let $S'$ be a $\mu_2-$set of $S_{n,m}$. If $S'\cap N_{S_{n,m}}(x) = \emptyset,$ 
then $S'\subseteq N_{S_{n,m}}(y)$ and $|S'|\le n+1$. However, since $n+1\le m+1$, it must happen 
that $m=n$, and in such case we indeed have $\mu_2(S_{n,m})=m+1$, as desired. 
If $n<m$, then $S'\cap N_{S_{n,m}}(w_m) \not = \emptyset.$ Given $x\in S'\cap N_{S_{n,m}}(w_m)\setminus\{w_n\}$ 
and $y\in N(w_n)\setminus \{w_m\},$ we have that $d(x,y)>2$ and then $y\not \in S'.$ Therefore 
$S'\subseteq N(w_m)$ and $|S'|\le m+1,$ which conclude the proof. 
\end{proof}

\begin{proposition}
    Let $T$ be a tree and let $k$ be an integer such that
    $2\le k\le diam(T)$. Then
    $$ \mu_k(T)=\max\{ \mu(T_k):T_k\in {\cal T}_k \} $$
    where ${\cal T}_k=\{T_k\subseteq T: T_k 
    \text{ is a tree with diameter at most $k$}\}.$
\end{proposition}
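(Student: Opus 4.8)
The plan is to establish the equality by proving the two inequalities separately, exploiting throughout the defining feature of trees: between any two vertices there is a \emph{unique} path, so every connected subgraph of $T$ is an isometric subtree and every shortest path is forced. In particular, for any subtree $T'\subseteq T$ and any $u,v\in V(T')$ we have $d_{T'}(u,v)=d_T(u,v)$, and the $T$-geodesic between $u$ and $v$ is the only path realizing this distance.

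For the inequality $\mu_k(T)\ge\max\{\mu(T_k):T_k\in\mathcal{T}_k\}$, I would argue exactly as in Theorem~\ref{th:subgraphs}, only adapted to allow diameter strictly smaller than $k$. Pick $T_k\in\mathcal{T}_k$ attaining the maximum and let $S_H$ be a $\mu$-set of $T_k$. For $x,y\in S_H$ the unique $x$--$y$ path of $T$ lies inside the subtree $T_k$, so $d_T(x,y)=d_{T_k}(x,y)\le diam(T_k)\le k$; moreover the mutual visibility of $S_H$ in $T_k$ means this unique path meets $S_H$ only in $\{x,y\}$. Hence $x,y$ are $(S_H)_k$-visible in $T$, so $S_H$ is a $k$-distance mutual-visibility set of $T$ and $\mu_k(T)\ge|S_H|=\mu(T_k)$.

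The substantive direction is $\mu_k(T)\le\max\{\mu(T_k):T_k\in\mathcal{T}_k\}$. Let $S$ be a $\mu_k$-set of $T$, and let $T_S$ be the minimal subtree of $T$ containing $S$ (the Steiner tree of $S$). My plan is to show that $T_S\in\mathcal{T}_k$ and that $S$ is a mutual-visibility set of $T_S$, which yields $\mu(T_S)\ge|S|=\mu_k(T)$. The first claim is the key step: every leaf of $T_S$ must belong to $S$, since otherwise a pendant vertex outside $S$ could be deleted contradicting minimality; as the diameter of a tree is realized between two of its leaves, we obtain $diam(T_S)=\max_{x,y\in S}d_T(x,y)$. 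Because any two vertices of $S$ are $S_k$-visible, a shortest path of length at most $k$ joins them, forcing $d_T(x,y)\le k$ for all $x,y\in S$; therefore $diam(T_S)\le k$ and $T_S\in\mathcal{T}_k$. Finally, for $x,y\in S$ the unique $x$--$y$ path of $T$ sits inside $T_S$ and, by $S_k$-visibility, meets $S$ only in its endpoints, so it witnesses the mutual visibility of $x,y$ in $T_S$; thus $S$ is a mutual-visibility set of $T_S$.

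I expect the only real obstacle to be the diameter bound on the Steiner tree $T_S$, that is, verifying that its extremal vertices lie in $S$ so that its diameter coincides with the largest distance inside $S$. Once the unique-geodesic property of trees is invoked, the isometry of $T_S$ and the transfer of the visibility condition between $T$ and $T_S$ are automatic, and the two inequalities combine to give the stated equality.
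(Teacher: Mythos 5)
Your proof is correct and takes essentially the same route as the paper: the lower bound by transplanting a $\mu$-set of an optimal subtree (using uniqueness of paths in trees), and the upper bound by passing to the minimal subtree spanned by a $\mu_k$-set and showing it lies in $\mathcal{T}_k$ and inherits $S$ as a mutual-visibility set. In fact, your explicit Steiner-tree argument (every leaf of $T_S$ lies in $S$, hence $diam(T_S)\le k$) is a more careful rendering of the paper's looser claim that ``the subgraph induced by the set of vertices $S$ is connected,'' which taken literally is false; what the paper intends is exactly your $T_S$.
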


\begin{proof}
Given a tree $T_k\subseteq T$ with diameter at most $k$. Since $T_k$ is a connected induced 
isometric subgraph of $T$ with diameter at most $k$ we have that $\mu_k(T)\ge |V(T_k)|$. Hence, 
$\mu_k(T) \ge \max\{ \mu(T_k):T_k\in {\cal T}_k \}$.
    
On the other hand, let $S$ be a $\mu_k$-set in $T$. The subgraph induced by the set of vertices $S$ 
is connected and therefore it is a subtree $T'\subseteq T$. Since $S$ is a $k$-distance 
mutual-visibility set, the diameter of $T'$ is at most $k$. Therefore, 
$\mu_k(T)=|S|=\mu_k(T') \le \max\{ \mu(T_k):T_k\in {\cal T}_k \}$, which concludes the proof.  
\end{proof}

Given a caterpillar graph $T$, let us note that 
$B(T)=\{v\in V(T)\ : \ d_T(v)>1\}$ induces 
a path with $r=diam(T)-1$ vertices. The two following corollaries are examples of the result above. To show them it is sufficient to compute the number of leaves of any
subtree of $T$ with diameter exactly $k$.

\begin{corollary}
Let $T$ be a caterpillar graph with $r=diam(T)-1\ge 3$ and 
$d_T(v)=q \ge 2$ for all $v\in B(T).$ 
Let $k$ be an integer with $2\leq k \leq diam(T)=r+1.$ 
Then $\mu_k(T)=k(q-2)-q+4.$  
\end{corollary}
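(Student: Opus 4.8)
The plan is to combine the preceding proposition with the fact that the mutual-visibility number of a tree equals its number of leaves, and then to optimize over the subtrees of $T$ of bounded diameter. First I would fix the structure of $T$: its spine $B(T)$ is a path $v_1v_2\cdots v_r$ in which each \emph{internal} spine vertex $v_2,\dots,v_{r-1}$ carries exactly $q-2$ leaves and each \emph{end} spine vertex $v_1,v_r$ carries $q-1$ leaves; a short check then gives $diam(T)=r+1$. Since every subtree $T_k\subseteq T$ is itself a block graph whose set of extreme vertices coincides with its set of leaves, the block-graph result of \cite{DiStefano} quoted above yields $\mu(T_k)=|L(T_k)|$. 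Hence the preceding proposition reduces the claim to proving
$$\max\{\,|L(T_k)| : T_k\subseteq T \text{ a subtree with } diam(T_k)\le k\,\}=k(q-2)-q+4.$$

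For the upper bound I would parametrize a subtree $T_k$ by the consecutive spine segment $v_a,\dots,v_b$ it uses (connectedness in a caterpillar forces this segment to be consecutive) together with the leaves it retains. Every interior segment vertex $v_{a+1},\dots,v_{b-1}$ should keep all its $q-2$ leaves, while at each end $v_a$ (resp. $v_b$) there are two options: \emph{extend} it by keeping all its leaves (adding $q-1$ or $q-2$ leaves but increasing that end's diameter contribution by $1$), or leave it \emph{bare} by discarding its leaves (so $v_a$ itself becomes a leaf of $T_k$, contributing $1$, at no diameter cost). Writing $D=diam(T_k)=(b-a)+x+y$ with $x,y\in\{0,1\}$ recording the two end-choices, the key is the uniform per-end estimate: each end contributes at most $x(q-2)+1$ leaves, which is tight for a bare end ($x=0$) and for an extended genuine endpoint ($x=1$ at $v_1$ or $v_r$). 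Summing over the interior and the two ends gives $|L(T_k)|\le(b-a-1)(q-2)+[x(q-2)+1]+[y(q-2)+1]=(D-1)(q-2)+2\le(k-1)(q-2)+2=k(q-2)-q+4$, using $q-2\ge 0$ and $D\le k$. The degenerate one-vertex (star) subtrees are dispatched directly, since they have at most $q-1<k(q-2)-q+4$ leaves whenever $k\ge 2$.

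For the matching lower bound I would exhibit an extremal subtree. When $2\le k\le r$, take the segment $v_1,\dots,v_k$, keep all leaves of $v_1,\dots,v_{k-1}$ and none of $v_k$: then $v_1$ is an extended genuine endpoint and $v_k$ is bare, so $D=(k-1)+1+0=k$ and both per-end inequalities are tight, yielding exactly $(k-1)(q-2)+2=k(q-2)-q+4$ leaves. When $k=r+1$, take $T_k=T$ itself, whose diameter is $r+1=k$ and whose leaf count $|L(T)|=2(q-1)+(r-2)(q-2)=r(q-2)+2$ equals $k(q-2)-q+4$ at $k=r+1$. Combining the two bounds through the proposition then finishes the proof.

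The main obstacle I anticipate is the optimization itself, specifically the counterintuitive trade-off at the ends of the segment: for $k\le r$ the best subtree does \emph{not} extend both ends (doing so would cost one leaf), but instead spends the scarce diameter budget on one extra spine vertex left bare. Getting the bookkeeping right so that the two apparently different optimizers—one end extended with one bare for $k\le r$, versus both ends extended for $k=r+1$—are captured by the single bound $(D-1)(q-2)+2$ is the delicate point; isolating the uniform per-end estimate $x(q-2)+1$ is exactly what keeps the upper bound clean and avoids a more cumbersome case analysis.
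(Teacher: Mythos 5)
Your proof is correct and takes essentially the same route as the paper: the paper derives this corollary from the preceding proposition together with the cited result that the mutual-visibility number of a tree (as a block graph) equals its number of leaves, remarking only that ``it is sufficient to compute the number of leaves of any subtree of $T$ with diameter exactly $k$.'' Your parametrization by spine segments with the per-end estimate $x(q-2)+1$, giving $|L(T_k)|\le (D-1)(q-2)+2$, together with the extremal subtree (one extended end, one bare end for $k\le r$, and $T$ itself for $k=r+1$), is a correct and complete working-out of exactly that computation, which the paper leaves to the reader.
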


\begin{corollary}
Let $T$ be a perfect tree with maximum degree $\Delta.$ Let 
$k$ be a positive integer with $2\le k \le diam(T).$ 
Then 
$$ 
    \mu_k(T) = \left\{\ 
        \begin{array}{ll}
    \Delta\cdot(\Delta-1)^{\frac{k-2}{2} }; 
        & \text{if } k \text{ is even}; \\[0.4em]
    2\cdot (\Delta-1)^{\frac{k-1}{2}};  & \text{if } k \text{ is odd.}
\end{array} 
\right.
$$     
\end{corollary}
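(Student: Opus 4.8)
The plan is to apply the preceding proposition, which reduces $\mu_k(T)$ for a tree to the quantity $\max\{|V(T_k)| : T_k \subseteq T \text{ is a subtree of diameter at most } k\}$, and then to carry out the combinatorial optimization explicitly for a perfect tree. Since a perfect tree is complete and every vertex except the leaves has degree $\Delta$, the structure is highly symmetric, so I expect that a largest subtree of diameter at most $k$ is obtained by taking a ball of the appropriate radius centered at a well-chosen vertex (a single vertex when $k$ is even, an edge when $k$ is odd). The main task is therefore to determine the correct center, count the vertices in the optimal subtree, and argue that no subtree of diameter at most $k$ can be larger.

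First I would fix the root and recall that in a perfect tree the root has degree $\Delta$, every internal non-root vertex has degree $\Delta$ (hence $\Delta-1$ children), and all leaves lie at the same depth. A subtree $T'$ of diameter at most $k$ must satisfy $\mathrm{diam}(T')\le k$; since $T'$ is a tree, its diameter equals the length of its longest path. I would split into the parity of $k$. When $k$ is even, write $k=2\rho$ and consider the subtree consisting of a center vertex $c$ together with all vertices within distance $\rho$ of $c$; to maximize the count, $c$ should be a vertex all of whose neighbors can branch with full multiplicity, i.e.\ $c$ has $\Delta$ subtrees hanging off it, each a complete $(\Delta-1)$-ary tree of height $\rho-1$. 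Counting the leaves at the extreme level (which by the proposition is what determines $\mu$, since for a tree of diameter exactly $k$ the mutual-visibility number is its number of leaves) gives $\Delta\cdot(\Delta-1)^{\rho-1}=\Delta\cdot(\Delta-1)^{(k-2)/2}$. When $k$ is odd, write $k=2\rho+1$ and center the subtree on an \emph{edge} $uv$: each endpoint carries a complete $(\Delta-1)$-ary tree of height $\rho$ reaching the two ends of the diametral path, and counting the extreme-level vertices yields $2\cdot(\Delta-1)^{\rho}=2\cdot(\Delta-1)^{(k-1)/2}$.

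The hard part will be the optimality argument, namely showing that no subtree of diameter at most $k$ has more leaves than the symmetric ball/edge-centered subtree described above. For this I would argue that for any subtree $T'$ of diameter at most $k$ one may pick a diametral path and a center (a middle vertex if the diameter is even, a middle edge if odd), and then observe that every leaf of $T'$ lies within distance $\lceil k/2\rceil$ of that center along a path using edges of $T'$, which are edges of the perfect tree $T$. Since in the perfect tree the number of vertices reachable within a given distance from any vertex (respectively edge) is maximized by the fully branching configuration, and the degree at every non-leaf is exactly $\Delta$, the count cannot exceed the stated value. I would also note that one should verify the chosen subtree indeed has diameter \emph{at most} $k$ and not larger, and that the relevant subtree actually exists in $T$ (which requires $k\le\mathrm{diam}(T)$, precisely the hypothesis $2\le k\le\mathrm{diam}(T)$).

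Finally, I would assemble the two cases into the displayed formula. The only genuine subtlety is bookkeeping with the parity of $k$ and making sure the exponents $(k-2)/2$ and $(k-1)/2$ match the height of the branching subtrees, so I would double-check the small cases ($k=2$ giving $\mu_2(T)=\Delta$, the number of neighbors of a center, and $k=3$ giving $\mu_3(T)=2(\Delta-1)$, the leaves on either side of a central edge) against direct inspection to confirm the indices are correct before concluding.
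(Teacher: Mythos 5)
Your route is the same one the paper takes: invoke the preceding proposition and then optimize the leaf count of subtrees of diameter at most $k$ in the perfect tree (the paper itself only remarks that ``it is sufficient to compute the number of leaves of any subtree of $T$ with diameter exactly $k$''), and your vertex-centered/edge-centered balls are exactly the optimal subtrees, with the right sanity checks at $k=2,3$. However, there is a genuine flaw in how you set up the reduction, and it contaminates the step you yourself identify as the hard one. The proposition does \emph{not} reduce $\mu_k(T)$ to $\max\{|V(T_k)|\}$; it reduces it to $\max\{\mu(T_k)\}$, and since the mutual-visibility number of a tree equals its number of leaves (Di Stefano's result, used throughout the paper), the quantity to maximize is the number of \emph{leaves} of $T_k$. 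These are very different optimizations: for even $k=2\rho$, maximizing $|V(T_k)|$ would select the entire ball of radius $\rho$, internal vertices included, which is strictly larger than $\Delta(\Delta-1)^{\rho-1}$, so the plan as literally stated would prove a false formula. Your construction silently self-corrects (you count ``the leaves at the extreme level'' and even state the correct fact parenthetically), but your optimality argument reverts to the wrong reduction: you bound ``the number of vertices reachable within a given distance'' from the center, and that quantity exceeds the stated bound, so as written the upper-bound step cannot yield the claimed inequality.

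To repair the optimality step you must bound leaves, not vertices, and you must handle the fact that a leaf of a subtree $T'$ with $\mathrm{diam}(T')\le k$ need not lie at distance exactly $\lceil k/2\rceil$ from the center of $T'$ (a broom-shaped subtree has leaves at several depths). A clean fix has three ingredients: (i) $T'$ is contained in the ball $B$ of radius $\lceil k/2\rceil$ around its center (a vertex or an edge according to parity); (ii) for any subtree $T'$ of a tree $B$ one has $|L(T')|\le |L(B)|$ --- map each leaf $v$ of $T'$ that is internal in $B$ to a leaf of $B$ lying in a component of $B-v$ disjoint from $T'\setminus\{v\}$, and check injectivity by the separation argument; (iii) in a perfect tree a truncated ball (one whose center is too close to the leaves of $T$) has at most as many leaves as a full one, since each truncated branch replaces $(\Delta-1)^{j}\ge 1$ potential leaves by a single leaf, and a full ball exists precisely because $k\le \mathrm{diam}(T)$. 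With (i)--(iii) the counts $\Delta(\Delta-1)^{(k-2)/2}$ for even $k$ and $2(\Delta-1)^{(k-1)/2}$ for odd $k$ follow, matching your construction and the stated formula.
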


To end this section, we now consider the case of corona graphs whose first factor is a path. Note that, given any graph $G$, by reasoning as in Proposition \ref{prop:bistar}, we deduce that $\mu_2(P_2\odot G)=|V(G)|+1$ and that and $\mu_3(P_2\odot G)=2\cdot|V(G)|$. On the other hand, if $|V(G)|=1$, then $P_r\odot G$ is a caterpillar. The remaining related corona graphs are next studied.

\begin{theorem}
\label{th:corona}
Let $G$ be a graph with $n=|V(G)|\ge 2$ and let $P_r$ be the path on $r\ge 3$ vertices. If $k$ is a positive integer with $2\le k\le r+1$, then 
\begin{enumerate}
    \item $\mu_k(P_r\odot G)=(k-1)\cdot n+2$ for any $k$ with $2\le k\le r-1$; 
    \item $\mu_{r}(P_r\odot G)=(r-1)\cdot n+1$; and
    \item $\mu_{r+1}(P_r\odot G)=\mu(P_r\odot G)r\cdot n$.
\end{enumerate}
\end{theorem}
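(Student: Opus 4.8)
The plan is to fix notation so that $P_r\odot G$ consists of a path $v_1v_2\cdots v_r$ together with $r$ disjoint copies $G_1,\dots,G_r$ of $G$, where every vertex of $G_i$ is joined to $v_i$. A direct distance computation gives $d(v_i,v_j)=|i-j|$, $d(x,v_j)=1+|i-j|$ for $x\in V(G_i)$, and $d(x,y)=2+|i-j|$ for $x\in V(G_i)$, $y\in V(G_j)$ with $i\ne j$, while two vertices of the same copy are at distance $1$ or $2$; in particular $diam(P_r\odot G)=r+1$. The crucial structural fact I would record first is that whenever $x\in V(G_i)$ and $y\in V(G_j)$ lie in distinct copies, the unique $x,y$-geodesic is $x\,v_i\,v_{i+1}\cdots v_j\,y$, and that $v_1v_2\cdots v_r$ is an isometric path of $P_r\odot G$.

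From these facts I would extract, for an arbitrary $k$-distance mutual-visibility set $S$, two counting tools. Writing $S_i=S\cap V(G_i)$ and $W=S\cap V(P_r)$: first, since $v_1\cdots v_r$ is isometric and the geodesic between two of its vertices is unique, no three vertices of $W$ can be pairwise $S_k$-visible (the middle one blocks the outer two), so $|W|\le 2$. Second, if $S_a\ne\emptyset$ and $S_b\ne\emptyset$ with $a<b$, then taking $x\in S_a$, $y\in S_b$ and using the unique geodesic above forces both $d(x,y)=2+(b-a)\le k$ and $v_a,v_{a+1},\dots,v_b\notin S$. Hence the nonempty copies have index-span $b-a\le k-2$, so there are at most $k-1$ of them, giving $\sum_i|S_i|\le (k-1)n$ whenever $k\le r$, and trivially $\sum_i|S_i|\le rn$ always.

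For item 1 (the range $2\le k\le r-1$) these two tools already yield $\mu_k(P_r\odot G)\le (k-1)n+2$. For the matching lower bound I would exhibit $S=\{v_1\}\cup V(G_2)\cup\cdots\cup V(G_k)\cup\{v_{k+1}\}$, which exists because $k+1\le r$ and has size $(k-1)n+2$, and verify $S_k$-visibility case by case: the extreme chosen copies $G_2,G_k$ are at distance exactly $k$, all intermediate path vertices $v_2,\dots,v_k$ lie outside $S$ (so they act as unobstructed internal vertices for both cross-copy and within-copy pairs), and $v_1,v_{k+1}$ each reach every chosen copy within distance $k$.

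The main work — and the place where the ``$+2$'' degrades — is in items 2 and 3, where the distance bound no longer forbids using many copies. For item 2 ($k=r$) the tools still give $\le (r-1)n+2$, so I would argue that equality is impossible: attaining $\sum_i|S_i|=(r-1)n$ forces exactly $r-1$ full copies whose index-span has length $r-2$, which can only be $G_1,\dots,G_{r-1}$ or $G_2,\dots,G_r$; in either case the excluded path vertices leave room for just one element of $W$, so $|S|\le (r-1)n+1$, a value realized by $\{v_1\}\cup V(G_2)\cup\cdots\cup V(G_r)$. For item 3 ($k=r+1=diam$) I would split on whether some copy is empty: if so then $\sum_i|S_i|\le (r-1)n$ and $|S|\le (r-1)n+2\le rn$ precisely because $n\ge 2$; if every copy is nonempty then $S_1,S_r\ne\emptyset$ forces all $v_i\notin S$, hence $W=\emptyset$ and $|S|\le rn$, the lower bound $rn$ coming from $\bigcup_i V(G_i)$ (which is even a classical mutual-visibility set). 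The delicate point throughout is the trade-off between how many full copies the distance bound permits and how blocking of path vertices caps $|W|$; making the two ends of the copy-span interact correctly is exactly what separates the three regimes.
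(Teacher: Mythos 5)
Your proposal is correct, and it proves all three items, but the mechanism for the upper bounds differs from the paper's. The paper uses the same extremal constructions as you do (e.g.\ $\{x_1\}\cup V(G_2)\cup\cdots\cup V(G_k)\cup\{x_{k+1}\}$ for item~1), but its upper bounds lean on \emph{maximality} of a $\mu_k$-set $S'$: it first proves an all-or-nothing claim (if $S'$ meets a copy $V(G_i)$ then $V(G_i)\subseteq S'$) via an augmentation argument, then localizes the full copies in a window indexed from $i^*=\min\{i : S'\cap V(G_i)\neq\emptyset\}$, and for items~2 and~3 it uses swap arguments of the form ``$S''=S'\setminus\{x_i\}\cup V(G_i)$ would be a larger $k$-distance mutual-visibility set, a contradiction.'' Your route instead extracts two bounds valid for \emph{every} $k$-distance mutual-visibility set, maximum or not: $|S\cap V(P_r)|\le 2$ (from uniqueness of spine geodesics) and the span/window bound $\sum_i|S_i|\le (k-1)n$ together with the exclusion of all path vertices lying between two nonempty copies; items~1--3 then follow by pure counting, with item~2 needing the extra observation that a full window of $r-1$ copies pins down the window position and caps $|W|$ at $1$, and item~3 splitting on whether some copy is empty (where $n\ge 2$ enters). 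What your approach buys is that it avoids having to verify that the paper's augmented and swapped sets really remain $k$-distance mutual-visibility sets (a step the paper asserts rather than checks in detail), and it makes explicit the trade-off -- window position versus available path endpoints -- that produces the ``$+2$, $+1$, $+0$'' pattern across the three regimes. What the paper's approach buys is brevity and a structural by-product about maximum sets (copies are all-or-nothing in any $\mu_k$-set), which your counting argument does not by itself provide.
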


\begin{proof}
Let $2\le k\le r-1$. Assume $V(P_r)=\{x_i\,:\, i\in [r]\}$ such that $x_ix_{i+1}\in E(P_r)$ for $i\in [r-1]$. We denote 
by $G_i$ the $i$th copy of $G$ associated to the $i$th vertex $x_i.$
We have that $S=\{x_1\}\cup V(G_2)\cup\cdots\cup V(G_{k})\cup \{x_{k+1}\}$ is a $k$-distance mutual-visibility set of the corona graph $P_r\odot G$, and therefore $\mu_k(P_r\odot G)\ge |S|=(k-1)\cdot n+2.$

Now, let $S'$ be a $\mu_k$-set of $P_r\odot G.$ If $x\in S'\cap V(G_i)$ for some $i\in [r]$, then we claim that $V(G_i)\subseteq S'$. Suppose to the contrary that there is a vertex $x'\in V(G_i)\setminus S'$. If $x_i\notin S'$, then clearly $\{x'\}\cup S'$ is also a $k$-distance mutual-visibility set of $P_r\odot G$, which is not possible. Moreover, if $x_i\in S'$, then by the structure of corona graphs ($x_i$ is a cut vertex), it must happen that $S'\subseteq V(G_i)\cup \{x_i\}$, which means $|S'|\le n+1$, but $|S'|=\mu_k(P_r\odot G)\ge (k-1)\cdot n+2\ge n+2$, which is again a contradiction. Thus, $V(G_i)\subseteq S'$ as claimed. We consider $V(G_{i^*})$ with $i^*=\min\{i\ | \ S\cap V(G_i)\not=\emptyset \}.$ We note 
that if $i^*=1$ or $i^*=r-k+2$, then we have $|S'|\le (k-1)\cdot n+1,$ and this not possible since $|S'|\ge (k-1)\cdot n+2.$ Therefore, $S'\subseteq \{x_{{i^*}-1}\}\cup V(G_{i^*})\cup \ldots\cup V(G_{k+{i^*}-2})\cup\{x_{k+{i^*}-1}\}$, and so $|S'|\le (k-1)\cdot n+2$, which gives the equality. 

For $k=r,$ the set $S=V(G_1)\cup\ldots\cup V(G_{r-1})\cup \{x_{r}\}$ is an $r$-distance mutual-visibility set of cardinality $(r-1)\cdot n+1.$ Let us consider $|S'|$ be a $\mu_k$-set. By reasoning as in the 
previous case, for $1\le i^*\le 2$ we have that $|S'|\le (r-1)\cdot |V(G)|+1.$ If $i^*\ge 3,$ then 
$V(G_2)\cap S'=\emptyset.$ Thus, $S''=S'\setminus\{x_2\}\cup V(G_2)$ is a $k$-distance mutual-visibility set with $|S''|>|S'|,$ a contradiction. 

Finally, if $k=r+1=diam(P_r\odot G)$, then $\mu_k(P_r\odot G)=\mu(P_r\odot G)=r\cdot n.$ It is clear that $S=\bigcup_{j=[r]} V(G_j)$ is a $k$-distance mutual-visibility set. Now, given $S'$ a $\mu_k$-set, 
we have that $S'\subseteq \bigcup_{j=[r]} V(G_j).$ Otherwise, if there exists $x=x_i\in S'\setminus 
\bigcup_{j\in [r]} V(G_j),$ then $S'\cap V(G_i)=\emptyset.$ Therefore, $S''=S\setminus\{x_i\}\cup V(G_i)$ 
is a $k$-distance mutual-visibility set with $|S''|>|S'|,$ but this is not possible and we conclude the proof.
\end{proof}

\section{General bounds}

Based again on the NP-hardness of computing the $k$-distance mutual-visibility number of graphs, it would be desirable to develop several tight bounds on this parameter to better approximate its value when a closed formula cannot be reached. We next centre our attention on this direction.

We first present a bound for $\mu_k(G)$ of a graph $G$, in terms of the minimum and maximum degrees, and the girth of $G$. 

\begin{lemma}\label{indepe}
Let $G$ be a graph with girth $g$. Let $S$ be a $k$-distance mutual visibility set of $G$, with $|S|\ge 3$, being $k<\lfloor g/2\rfloor$. Then $S$ is an independent set in $G$. 
\end{lemma}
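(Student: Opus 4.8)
The plan is to argue by contradiction: assuming $S$ is not independent, I will exhibit a cycle of length at most $2k+1$, which is impossible because $2k+1<g$. The inequality is immediate from the hypothesis $k<\lfloor g/2\rfloor$, since $2k\le 2\lfloor g/2\rfloor-2\le g-2$, and hence $2k+1\le g-1<g$.

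So suppose two adjacent vertices $u,v\in S$ exist. As $|S|\ge 3$, I would fix a third vertex $w\in S\setminus\{u,v\}$. Because $S$ is a $k$-distance mutual-visibility set, $u$ and $w$ are $S_k$-visible, so there is a geodesic $P$ from $u$ to $w$ of length $d(u,w)\le k$ all of whose internal vertices avoid $S$; symmetrically there is a geodesic $Q$ from $v$ to $w$ of length $d(v,w)\le k$ with internal vertices off $S$. First I would note that $u\notin V(Q)$ and $v\notin V(P)$: since $u\neq v,w$, were $u$ on $Q$ it would be an \emph{internal} vertex of $Q$, contradicting that $Q$ avoids $S$ internally (because $u\in S$); the same reasoning handles $v$.

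Next I would extract an honest cycle. Traversing $P$ from $u$, let $z$ be the first vertex that also lies on $Q$ (it exists because $w\in V(P)\cap V(Q)$); by the previous step $z\neq u$ and $z\neq v$. The $u$--$z$ subpath $A$ of $P$ meets $Q$ only at $z$, while the $z$--$v$ subpath $B$ of $Q$ lies entirely on $Q$, so $A$ and $B$ share only $z$. Hence $A$, the reverse of $B$, and the edge $uv$ close up into a genuine cycle $C$, with $u,v,z$ pairwise distinct and neither $u$ nor $v$ on the opposite geodesic. Its length is $|A|+|B|+1\le d(u,w)+d(v,w)+1\le 2k+1<g$, contradicting that the shortest cycle of $G$ has length $g$. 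Therefore no two vertices of $S$ are adjacent, i.e.\ $S$ is independent.

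I expect the main obstacle to be the bookkeeping ensuring that $C$ is a true cycle rather than a mere closed walk, that is, ruling out spurious coincidences among $P$, $Q$, and the edge $uv$. The visibility hypothesis is precisely what resolves this, as it forbids the $S$-vertices $u$ and $v$ from lying in the interior of the opposite geodesic, and the first-intersection choice of $z$ then keeps the two arcs internally disjoint. Once the cycle is secured, the length bound is routine.
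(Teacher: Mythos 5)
Your proof is correct and follows essentially the same route as the paper's: an edge $uv$ inside $S$, together with the two geodesics of length at most $k$ from $u$ and $v$ to a third vertex $w\in S$, yields a cycle of length at most $2k+1\le g-1$, contradicting the girth. The only difference is that you carry out the bookkeeping (the first-intersection vertex $z$ and the exclusion of $u,v$ from the opposite geodesic) to extract a genuine cycle from the closed walk, a step the paper leaves implicit with ``such paths induce a cycle.''
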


\begin{proof}
Let $u,v,w$ be three vertices of $S$ and let us consider a shortest $u,v$-path, a shortest $v,w$-path and a shortest $u,w$-path, all of them of length at most $k$, and without internal vertices in $S$. If one of these paths is an edge, then such paths induce a cycle of length at most $2k+1\le 2(\lfloor g/2\rfloor-1)+1\le g-1$, which is not possible. Thus, none of these shortest paths can be an edge, which proves the result.
\end{proof}

In what follows, for an integer $r\ge 2$ and a vertex $v$ of a graph $G$, by $N_r(v)$ we mean the set of vertices of $G$ at distance exactly $r$ from $v$.

\begin{theorem}\label{cota2}
    Let $G$ be a graph with girth $g\ge 6$ and maximum degree $\Delta\ge 3$. Then $\mu_2(G)\le 1+\Delta(\Delta-1)$. 
\end{theorem}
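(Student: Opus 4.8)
The plan is to start from a largest $2$-distance mutual-visibility set $S$ and reduce everything to counting vertices at distance exactly $2$ from a single fixed vertex. If $|S|\le 2$ the bound is immediate, since $\Delta\ge 3$ forces $1+\Delta(\Delta-1)\ge 7$, so I would assume $|S|\ge 3$ from the outset. Because $g\ge 6$ gives $\lfloor g/2\rfloor\ge 3>2=k$, Lemma \ref{indepe} applies directly and tells me that $S$ is an independent set.

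Next I would pin down the distances inside $S$. Any two vertices $u,v\in S$ are $S_2$-visible, so $d(u,v)\le 2$; and since $S$ is independent, $d(u,v)\ne 1$. Hence every pair of distinct vertices of $S$ lies at distance exactly $2$. Fixing any $u\in S$, this says precisely that $S\setminus\{u\}\subseteq N_2(u)$, so $|S|\le 1+|N_2(u)|$, and it remains to bound $|N_2(u)|$.

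The core of the argument is the inequality $|N_2(u)|\le \Delta(\Delta-1)$, and this is where the girth hypothesis does all the work. Since $g\ge 6>3$ there are no triangles, so no two neighbors of $u$ are adjacent; consequently, for each $w\in N(u)$, every vertex of $N(w)\setminus\{u\}$ sits at distance exactly $2$ from $u$, that is, $N(w)\setminus\{u\}\subseteq N_2(u)$, contributing at most $d(w)-1\le \Delta-1$ vertices. Since $g\ge 6>4$ there are no $4$-cycles, which forces these contributions to be pairwise disjoint: if some $x$ were adjacent to two distinct neighbors $w_1,w_2\in N(u)$, then $u\,w_1\,x\,w_2\,u$ would be a $4$-cycle. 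Therefore $N_2(u)$ is the disjoint union of the sets $N(w)\setminus\{u\}$ over $w\in N(u)$, whence $|N_2(u)|=\sum_{w\in N(u)}(d(w)-1)\le d(u)(\Delta-1)\le \Delta(\Delta-1)$. Combining this with $|S|\le 1+|N_2(u)|$ yields $\mu_2(G)=|S|\le 1+\Delta(\Delta-1)$.

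I expect the main obstacle to be the disjointness claim in the last step: everything hinges on \emph{simultaneously} ruling out triangles (so that each first-neighborhood lies entirely at distance $2$) and $4$-cycles (so that no distance-$2$ vertex is counted through two different neighbors of $u$). Both are guaranteed by $g\ge 6$, so the only real care needed is to phrase the two short-cycle contradictions precisely; the remaining degree counting is routine.
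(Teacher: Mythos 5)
Your proof is correct and follows essentially the same route as the paper's: invoke Lemma \ref{indepe} to get that $S$ is independent, conclude that $S\setminus\{u\}\subseteq N_2(u)$ for any $u\in S$, and bound $|N_2(u)|\le \Delta(\Delta-1)$ via the girth hypothesis. The only differences are cosmetic: you spell out the triangle and $C_4$ exclusions that the paper leaves implicit in the one-line estimate $|N_2(v)|\le |N_G(v)|(\Delta-1)$, and you dispose of the case $|S|\le 2$ directly, where the paper instead observes $|S|\ge\Delta\ge 3$.
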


\begin{proof}
Let $S$ be a $2$-distance mutual visibility set of $G$. Clearly, $|S|\ge \Delta\ge 3$. Let us 
consider any vertex $v\in S$ and the set of vertices $N_2(v)$ at distance exactly $2$ from $v$ in $G$. By Lemma~\ref{indepe}, we have that $S$ is an independent set, which implies that 
$S\subseteq \{v\}\cup N_2(v)$. As $G$ has girth at least 6, we have $|N_2(v)|\le |N_G(v)|(\Delta-1)
\le \Delta(\Delta-1)$, yielding that $|S|\le 1+\Delta(\Delta-1)$.
\end{proof}

The upper bound of Theorem~\ref{cota2} is tight when the girth of $G$ is 6. Let $G$ be the $(r,6)$-cage, with $r=3,4,5,6$. We know that $G$ is a balanced bipartite graph and it is the incidence graph of a projective plane, because $r-1$ is a prime power. In these cases we can observe that every two 
vertices of the same class are at distance 2, which means that a largest 2-distance mutual visibility 
set of any of these graphs must have cardinality at least $|V(G)|/2$, which is equal to $1+r(r-1)$, 
for $r=3,4,5,6$.   

\begin{theorem}\label{gen}
Let $G$ be a graph with maximum and minimum degrees $\Delta$ and $\delta\ge 2$, respectively, and girth $g\ge 4$. 
Then for every integer $k\ge 1+2\lfloor (g-1)/3\rfloor$, $$\displaystyle \mu_k(G)\ge (\Delta+\delta-2)
(\delta-1)^{\lfloor (g-1)/3\rfloor-1}.$$
\end{theorem}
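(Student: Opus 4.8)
The plan is to prove the bound by an explicit construction: I will build a large $k$-distance mutual-visibility set out of the ``leaves'' of a branching tree grown inside $G$, and then use the girth hypothesis to certify that the required geodesics avoid the set. Write $t:=\lfloor (g-1)/3\rfloor$, so that the hypothesis reads $k\ge 2t+1$ and the claimed bound is $(\Delta+\delta-2)(\delta-1)^{t-1}$. First I would fix a vertex $u$ with $d_G(u)=\Delta$ together with a neighbour $w$ (which exists since $\delta\ge 2$), and grow a breadth-first tree $T$ rooted at the \emph{edge} $uw$: level $1$ consists of $(N_G(u)\cup N_G(w))\setminus\{u,w\}$, and each subsequent level is obtained by giving every vertex $\delta-1$ fresh neighbours, continuing down to level $t$. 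The set $S$ is taken to be the collection of level-$t$ vertices.

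The counting step is where the two factors come from. Since $G$ has girth $g\ge 4$, the adjacent vertices $u,w$ share no common neighbour, so level $1$ has at least $(\Delta-1)+(\delta-1)=\Delta+\delta-2$ distinct vertices. For the branching to persist with factor $\delta-1$ at each of the remaining $t-1$ levels, I must check that the greedy choice of children never reuses a previously placed vertex; any such collision, or any chord inside $T$, would close a cycle of length at most $2t$, and since $g\ge 3t+1>2t$ this is impossible. Hence $T$ is embedded as an honest tree and $|S|\ge (\Delta+\delta-2)(\delta-1)^{t-1}$, giving the desired cardinality. The same inequality $t\le (g-1)/2$ shows that each ball of radius $t$ about $u$ (respectively $w$) induces a tree, so the tree-path from any leaf to $u$ or $w$ is the \emph{unique} geodesic of length $t$.

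It remains to prove that $S$ is a $k$-distance mutual-visibility set. The distance part is immediate: any two leaves are joined by a tree-path of length at most $2t+1$ (at most $t$ up to the near endpoint of $uw$, across the edge, and at most $t$ down again), so $d_G(p,q)\le 2t+1\le k$. The visibility part is the crux, and it is here that the factor $1/3$ is essential. Suppose, for contradiction, that every $p,q$-geodesic passes through a third leaf $z\in S$; then $d_G(p,z)+d_G(z,q)=d_G(p,q)\le 2t+1$. Taking a shortest $p,z$-subpath together with the (unique, length-$t$) tree-geodesics $z\to u$ and $u\to p$ produces a closed walk of length $d_G(p,z)+2t$; if this walk is non-degenerate it contains a cycle, forcing $g\le d_G(p,z)+2t$ and hence $d_G(p,z)\ge g-2t\ge t+1$, and symmetrically $d_G(z,q)\ge t+1$, so $d_G(p,q)\ge 2t+2$, a contradiction. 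If instead the walk is degenerate, then the relevant sub-distances coincide with their tree values, the tree is isometric along these pieces, and $z$ would have to lie on the tree-geodesic between two leaves, which has no internal leaf. Either way no leaf can be forced onto every geodesic, so an $S$-avoiding geodesic of length at most $k$ exists for each pair, and $S$ is $k$-distance mutually visible.

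The main obstacle is precisely this last verification. The distance and counting estimates are routine consequences of $g>2t$, but the visibility argument must rule out that a chosen leaf is \emph{unavoidable} on every geodesic, and the bookkeeping is delicate because geodesics between leaves need not follow $T$ once shortcuts appear (indeed the full tree of diameter $2t+1$ is generally \emph{not} isometric). The clean way through, which I expect to require the most care, is the three-segment closed-walk computation above: splitting a hypothetical bad geodesic at the offending leaf and closing it up through the centre uses exactly that each segment has length at most $t$ while a cycle must have length at least $g\ge 3t+1$, so the three pieces cannot all be short simultaneously. I would also separate the cases according to whether the two leaves lie on the same side of the edge $uw$ or on opposite sides, since the opposite-side pairs are the ones realising the extremal distance $2t+1$ and therefore the tightest use of the girth bound.
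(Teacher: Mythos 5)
Your construction is essentially the paper's: with $t=\lfloor (g-1)/3\rfloor$, the paper takes the exact-distance sets $\bigl(N_t(u)\cap N_{t+1}(w)\bigr)\cup\bigl(N_t(w)\cap N_{t+1}(u)\bigr)$ around an edge $uw$ with $d_G(u)=\Delta$, of which your level-$t$ tree leaves form a subset, and your counting and distance estimates match (one harmless slip: a collision between the two sides of your tree closes a cycle of length up to $2t+1$, not $2t$, still fine since $g\ge 3t+1\ge 2t+2$). The genuine gap is in the visibility verification, and it sits exactly in the cross-edge configurations you postpone to your last sentence. Your contradiction reads: non-degeneracy gives $g\le d_G(p,z)+2t$, hence $d_G(p,z)\ge t+1$, ``and symmetrically'' $d_G(z,q)\ge t+1$, so $d_G(p,q)\ge 2t+2$. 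But the walk length $d_G(p,z)+2t$ requires both tree-geodesics to have length $t$, i.e., $p$ and $z$ on the same side of the edge $uw$. If $z$ lies across the edge from $p$ (or $q$ from $z$), every routing of that walk through $u$ or through $w$ has length $d_G(p,z)+2t+1$, and non-degeneracy yields only $d_G(p,z)\ge g-2t-1\ge t$. So for $p,q$ on opposite sides the two bounds sum to $2t+1$, which does not contradict $d_G(p,q)\le 2t+1$: the configuration $d_G(p,q)=2t+1$, $d_G(p,z)=t+1$, $d_G(z,q)=t$, $g=3t+1$ survives your argument, and no re-routing fixes it. The same deficiency already bites same-side pairs at distance $2t$ with $z$ across the edge. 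This is a missing idea, not bookkeeping.

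The missing move is the paper's opening dichotomy, which is logically different from yours: instead of showing that \emph{every} geodesic avoids $S$, first try to \emph{exhibit} one. The paper checks whether the canonical path $Q$ through the centre (in your language, the tree path $p\to\cdots\to u\to w\to\cdots\to q$, whose internal vertices sit at levels $<t$ and hence outside $S$) is itself a geodesic; if so, the pair is $S_k$-visible through $Q$ and no contradiction is needed. Only when $Q$ fails to be a geodesic does the cycle argument run, and then $d_G(p,q)\le 2t$ (respectively $\le 2t-1$ for same-side pairs), which restores exactly the unit of slack your inequalities lack; the paper additionally routes each closed walk through $u$ or through $w$ according to which side $z$ shares with the nearer endpoint, and splits on whether the shorter geodesic piece has length $t$ or at most $t-1$, so every cycle produced has length at most $3t\le g-1$. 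Incorporating this dichotomy into your tree framework would repair the proof. A secondary point: your degenerate-walk discussion covers only the sub-case where both pieces of the offending geodesic follow the tree; the sub-case where one piece equals a tree path and the other does not is left open (to be fair, the paper ignores degeneracy altogether, and your observation that a cycle-free closed walk forces the geodesic piece to coincide with the tree path is correct and would be needed to make either write-up fully rigorous).
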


\begin{proof}
Denote by $d=\lfloor (g-1)/3\rfloor$. Let $v$ be a vertex of $G$ with degree $\Delta$ and let $u\in N_G(v)$. Consider the set $S=\left(N_{d}(u)\cap N_{d+1}(v)\right)\cup\left(N_{d}(v)\cap N_{d+1}(u)\right) $. We claim that $S$ is a $k$-distance mutual-visibility set of $G$. To see this, we must show that for every pair of vertices $x,y\in S$, there exists a shortest $x,y$-path of length at most $k$ with internal vertices not in $S$. 

If $\{x,y\}\subseteq \left(N_{d}(u)\cap N_{d+1}(v)\right)$, then from the shortest $u,x$-path and the shortest $u,y$-path, we can construct an $x,y$-path $Q$ of length at most $2d$. If $\{x,y\}\subseteq  \left(N_{d}(v)\cap N_{d+1}(u)\right)$, then from the shortest $v,x$-path and the shortest $v,y$-path, both of them of length $d$, we can construct an $x,y$-path $Q$ of length at most $2d$. If $x\in N_{d}(u)\cap N_{d+1}(v)$ and $y\in N_{d}(v)\cap N_{d+1}(u)$, then from the shortest $v,x$-path of length $d+1$ and the shortest $v,y$-path of length $d$, we can construct an $x,y$-path $Q$ of length at most $2d+1$. Analogously, if $x\in N_{d}(v)\cap N_{d+1}(u)$ and $y\in N_{d}(u)\cap N_{d+1}(v)$, then from the shortest $v,x$-path of length $d$ and the shortest $v,y$-path of length $d+1$, we can construct an $x,y$-path $Q$ of length at most $2d+1$. In any way, $2d+1=2\lfloor (g-1)/3\rfloor+1\le k$; that is, we may construct an $x,y$-path $Q$ of length at most $k$ in $G$. 

In addition, if the above $x,y$-path $Q$ of length at most $k$ is a shortest $x,y$-path, then we are done, because the internal vertices from the shortest $v,x$-path and the shortest $v,y$-path are not in $S$. If not, then the distance between $x$ and $y$ must be at most $2d-1$, if either $\{x,y\}\subseteq N_{d}(u)\cap N_{d+1}(v)$, or $\{x,y\}\subseteq \left(N_{d}(v)\cap N_{d+1}(u)\right)$; and at most $2d$ if either $x\in N_{d}(u)\cap N_{d+1}(v)$ and $y\in N_{d}(v)\cap N_{d+1}(u)$, or $x\in N_{d}(v)\cap N_{d+1}(u)$ and $y\in N_{d}(u)\cap N_{d+1}(v)$. Let $P$ be a shortest $x,y$-path and let us proceed by contradiction assuming that some internal vertex $z$ from $P$ belongs to $S$. Denote by $P_x$ the fragment of $P$ joining $x$ and $z$, and by $P_y$ the fragment of $P$ joining $z$ and $y$. Without loss of generality, we may assume that the length of $P_x$ is less than or equal to the length of $P_y$. 

If $\{x,y,z\}\subseteq N_{d}(u)\cap N_{d+1}(v)$, then $P_x$, whose length is at most $d-1$, together with the $u,x$-path and the $u,z$-path, both of them of length $d$, form a cycle of length at most $3d-1\le g-2$.

If $\{x,y,z\}\subseteq N_{d}(v)\cap N_{d+1}(u)$, then $P_x$, whose length is at most $d-1$, together with the $v,x$-path and the $v,z$-path, both of them of length $d$, form a cycle of length at most $3d-1\le g-2$.

If $\{x,y\}\subseteq N_{d}(u)\cap N_{d+1}(v)$ and $z\in N_{d}(v)\cap N_{d+1}(u)$, then $P_x$, whose length is at most $d-1$, together with the $v,x$-path, whose length is $d+1$ and the $v,z$-path, of length $d$, form a cycle of length at most $3d\le g-1$.  The procedure is analogous if $\{x,y\}\subseteq N_{d}(v)\cap N_{d+1}(u)$ and $z\in N_{d}(u)\cap N_{d+1}(v)$.

If $\{x,z\}\subseteq N_{d}(u)\cap N_{d+1}(v)$ and $y\in N_{d}(v)\cap N_{d+1}(u)$, then $P_x$, whose length is at most $d$, together with the $u,x$-path and the $u,z$-path, both of them of length $d$, form a cycle of length at most $3d\le g-1$. The arguments are analogous if $\{x,z\}\subseteq N_{d}(v)\cap N_{d+1}(u)$ and $y\in N_{d}(u)\cap N_{d+1}(v)$.

If $x\in N_{d}(v)\cap N_{d+1}(u)$ and $\{y,z\}\subseteq N_{d}(u)\cap N_{d+1}(v)$, then we have two possible situations. If $P_x$ has length exactly $d$, then $P_y$ also has length $d$, because the length of $P$ is at most $2d$ and the length of $P_x$ is less than or equal to the length of $P_y$. In this case, $P_y$ together with the $u,y$-path and the $u,z$-path, both of them of length $d$, form a cycle of length at most $3d\le g-1$. Also, if $P_x$ has length at most $d-1$, then $P_x$ together with the $v,x$-path, whose length is $d$, and the $v,z$-path, of length $d+1$, form a cycle of length at most $3d\le g-1$.  The reasoning is analogous if $x\in N_{d}(u)\cap N_{d+1}(v)$ and $\{y,z\}\subseteq N_{d}(v)\cap N_{d+1}(u)$. 

Hence, the shortest $x,y$-path $P$ has no internal vertices in $S$, yielding that $S$ is a 
$k$-distance mutual-visibility set of $G$.

Finally, since $G$ has girth $g$ and $d(v)=\Delta$, it is clear that $|N_{j}(v)\cap N_{j+1}(u)|\ge 
(\Delta-1)(\delta-1)^{j-1}$, for $j\in [d]$, and $|N_{j}(u)\cap N_{j+1}(v)|\ge (\delta-1)^j$, for $j\in [d]$, because otherwise, a cycle of length at most $2j\le 2d<g$ appears. Hence, 
$$ \displaystyle \mu_k(G)\ge |S|\ge  (\Delta-1)(\delta-1)^{d-1}+(\delta-1)^{d}=(\Delta+\delta-2)(\delta-1)^{d-1},$$
which completes the proof.
\end{proof}

If one wants to get rid of the minimum and maximum degrees to present bounds for $\mu_k(G)$ of a graph $G$ for specific values of $k$, then one needs to separate the study depending on the parity of the girth of $G$. This is made in the next 
two results.

\begin{theorem}\label{oddg}
Let $G$ be a graph with $n$ vertices and odd girth $g\ge 5$ and maximum degree $\Delta\ge 2$. If $S$ is a $(g-1)/2$-distance mutual visibility set of $G$ and $G[S]$ contains some edge, then $$\displaystyle |S|\le 2+(\Delta-1)^{(g-1)/2}.$$
\end{theorem}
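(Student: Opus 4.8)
The plan is to fix the edge guaranteed by the hypothesis and to \emph{root the whole configuration at one of its endpoints}, encoding the remaining vertices of $S$ by the shortest paths that witness their visibility. Write $k=(g-1)/2$ and let $u,v\in S$ be adjacent (they exist since $G[S]$ contains an edge). For every $w\in S\setminus\{u,v\}$, the $S_k$-visibility of the pair $w,v$ supplies a $v$--$w$ geodesic $Q_w$ of length $d(w,v)\le k$ all of whose internal vertices avoid $S$. I would first record two consequences of ``the interior of $Q_w$ is disjoint from $S$''\,: since $u\in S$ and $u\neq v,w$, the vertex $u$ cannot lie on $Q_w$, so the first edge of $Q_w$ is \emph{not} $vu$; and no other vertex of $S$ can be an interior vertex of $Q_w$.

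Next I would view each $Q_w$ as a non-backtracking walk starting at $v$ whose first step differs from $u$, and organise these walks in the rooted tree $\mathcal W$ whose nodes are all such walks, a walk of length $j$ being the child of its length-$(j-1)$ prefix. Because $v$ is adjacent to $u$, the root has at most $\Delta-1$ children, and every other node has at most $\Delta-1$ children as well; moreover each $Q_w$ sits at depth $d(w,v)\le k$. The second consequence above says precisely that no $Q_w$ is a prefix (ancestor in $\mathcal W$) of another $Q_{w'}$: otherwise $w$ would be an interior vertex of $Q_{w'}$, contradicting $w\in S$. Hence $\{Q_w:w\in S\setminus\{u,v\}\}$ is an \emph{antichain} of $\mathcal W$ lying in the first $k$ levels, and $w\mapsto Q_w$ is injective since $w$ is the endpoint of $Q_w$.

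It then remains to bound the size of such an antichain. Assigning to each antichain node one leaf of $\mathcal W$ lying in its own subtree (such a leaf exists, possibly the node itself) gives an injection into the leaves of the depth-$k$ truncation of $\mathcal W$, because incomparable nodes have disjoint subtrees. Since $\mathcal W$ branches by at most $\Delta-1$ and has depth at most $k$, it has at most $(\Delta-1)^{k}$ leaves, whence $|S\setminus\{u,v\}|\le(\Delta-1)^{k}=(\Delta-1)^{(g-1)/2}$ and therefore $|S|\le 2+(\Delta-1)^{(g-1)/2}$. The main point to get right is this antichain step: one must check that the $S$-avoiding geodesics are genuinely prefix-free (this is exactly where the memberships $u,w'\in S$ are used), and that a prefix-free family in a $(\Delta-1)$-branching tree of depth $k$ has at most $(\Delta-1)^{k}$ members even when some branches terminate early, which is why I count leaves rather than depth-$k$ nodes. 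I remark that the odd-girth hypothesis serves only to place us in the regime, complementary to Lemma~\ref{indepe}, where $G[S]$ may contain an edge at all; the displayed bound itself is driven entirely by the edge $uv$.
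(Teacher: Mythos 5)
Your proof is correct, and it takes a genuinely different route from the paper's. The paper argues symmetrically from both ends of the edge $uv$ and leans on the girth at every step: it first shows that $G[S]$ contains no path on three vertices, then that every $z\in S\setminus\{u,v\}$ lies at distance exactly $(g-1)/2$ from both $u$ and $v$, encodes each such $z$ by the pair $P_z$ of internal-vertex tuples of its geodesics to $u$ and to $v$, uses the absence of $4$-cycles to make $z\mapsto P_z$ injective, and finally proves by a pigeonhole argument (a collision would force a cycle of length $g-1<g$) that $|S|-2\le\min\{|F_u|(d_G(v)-1),(d_G(u)-1)|F_v|\}$, where $F_u=N_{(g-3)/2}(u)\cap N_{(g-1)/2}(v)$ and $F_v$ is defined symmetrically; this is then relaxed to $(\Delta-1)^{(g-1)/2}$. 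You instead root everything at the single endpoint $v$ and run a purely combinatorial count: the chosen $S$-avoiding geodesics $Q_w$ form a prefix-free family in the tree of non-backtracking walks from $v$ whose first step avoids $u$, and your antichain-to-leaves injection bounds any such family by $(\Delta-1)^k$ (counting leaves rather than depth-$k$ nodes is indeed the right way to absorb branches that die early). You correctly isolate where the hypothesis enters: $u\in S$ gives branching at most $\Delta-1$ at the root, and $w\in S$ gives prefix-freeness. As to what each approach buys: yours is shorter and proves a strictly stronger statement --- for every $k\ge 1$ and every graph of maximum degree $\Delta\ge 2$, any $k$-distance mutual-visibility set $S$ with an edge in $G[S]$ satisfies $|S|\le 2+(\Delta-1)^k$, with no girth hypothesis at all (your closing remark that the odd girth only places the theorem in the regime complementary to Lemma~\ref{indepe} is accurate); the paper's geometric argument, on the other hand, extracts finer intermediate information, namely the exact distances from $u,v$ to the rest of $S$ and the degree-sensitive bound $\min\{|F_u|(d_G(v)-1),(d_G(u)-1)|F_v|\}$, which can be sharper than $(\Delta-1)^{(g-1)/2}$ for irregular graphs.
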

  
\begin{proof}
Let $S$ be a $(g-1)/2$-distance mutual-visibility set of $G$. Observe that there is not a path $P_3=uvw$ in $G$ such that $\{u,v,w\}\subseteq S$, because in such a case, from this path and a shortest $u,w$-path of length at most $(g-1)/2$ with the internal vertices not in $S$, we can find a cycle of length at most $(g+3)/2<g$, because $g\ge 5$. Thus, as the subgraph of $G$ induced by, denoted $G[S]$, contains some edge, we deduce that $G[S]$ is formed by some isolated edges and some isolated vertices.

Consider any edge $uv\in E(G)$, with $\{u,v\}\subseteq S$. Since $S$ is a $(g-1)/2$-distance mutual-visibility set of $G$ and the girth of $G$ is equal to $g$, the every vertex $z\in S\setminus\{u,v\}$ is at distance equal to $(g-1)/2$ from $u$ and $v$. That is, for every $z\in S\setminus\{u,v\}$, there are a $z,u$-path and a $z,v$-path, both of them of length $(g-1)/2$, with the internal vertices in $V(G)\setminus S$. Each of these two paths has $(g-3)/2$ internal vertices, all of them in $V(G)\setminus S$. For every $z\in S\setminus\{u,v\}$, denote by $P_z$ the pair of $(g-3)/2$-tuples of internal vertices of the shortest $z,u$-path and the shortest $z,v$-path.  

Let us consider the subsets $F_u=N_{(g-3)/2}(u)\cap N_{(g-1)/2}(v)$ and $F_v=N_{(g-1)/2}(u)\cap N_{(g-3)/2}(v)$. Assume first that $P_z=P_w$ for two vertices $z,w\in S\setminus\{u,v\}$ and let us denote by $z'=F(u)\cap P_z$ and $w'=F(v)\cap P_z$. Then a cycle $z,z',w,w'$ of length 4 appears, which is a contradiction. Hence, $P_z \neq P_w$ for all pair of different vertices $z,w$ in $S\setminus \{u,v\}.$

\begin{claim}\label{S-2} 
$|S|-2\le \min\{|F_u|(d_G(v)-1),(d_G(u)-1)|F_v|\}$.
\end{claim}

Without loss of generality, we may assume that $|F_u|(dv(v)-1)\le (d_G(u)-1)|F_v|$. We proceed by contradiction supposing that $|S|-2>|F_u|(d_G(v)-1)$. Since $P_z \neq P_w$ for all pair of different vertices $z,w$ in 
$S\setminus \{u,v\}$, the number of pair of $(g-3)/2$-tuples of internal vertices of 
the shortest paths between vertices of $S\setminus \{u,v\}$ and $\{u,v\}$ is larger than $|F_u|(d(v)-1)$. 
This implies that there are two vertices, $w,z\in S\setminus\{u,v\}$, that satisfy these two assertions:

(1) The shortest $w,u$-path and the shortest $z,u$-path have the same internal vertices, which means that there is a vertex $y\in N_G(w)\cap N_G(z)$.

(2) There exists $x\in N_G(v)\setminus\{u\}$ such that $x$ is an internal vertex of the shortest $w,v$-path 
and the shortest $z,v$-path.

From (1) and (2) we can observe that the shortest $w,x$-path and the shortest $z,x$-path, both of them of length $(g-3)/2$ 
and the path with vertices $w,y,z$ form a cycle of length $g-1$, which is a contradiction. This proves the Claim. 

From Claim~\ref{S-2} we deduce that $\displaystyle |S|-2\le (\Delta-1)^{(g-1)/2}$ and the result follows.
\end{proof}

As an example, the upper bound of Theorem~\ref{oddg} is reached for the Petersen graph, which is a 
3-regular graph having girth 5. The independence number of this graph is 5. However, we can get a 2-distance 
mutual visibility set $D$ of cardinality 6 (see the red colored vertices of Figure~\ref{fig:peter}). 
Thus, every largest 2-distance mutual visibility set $S$ of the Petersen graph induces a subgraph with at least one edge. Then, by applying Theorem~\ref{oddg}, we have $|S|\le 6$, yielding that $\mu_2=6$.    

\begin{figure}[h]
\centering
\begin{tikzpicture}[scale=.4, transform shape, style=thick]
\tikzstyle{vert_black}=[fill=black, draw=black, shape=circle]
\tikzstyle{vert_dest}=[fill=none, draw=red, shape=circle, scale=2.25]

		\node [style={vert_black}] (0) at (0, 9) {};
		\node [style={vert_black}] (1) at (-6, 4) {};
		\node [style={vert_black}] (2) at (6, 4) {};
		\node [style={vert_black}] (3) at (-3.5, -4) {};
		\node [style={vert_black}] (4) at (3.5, -4) {};
		\node [style={vert_black}] (5) at (0, 6) {};
		\node [style={vert_black}] (6) at (-3.75, 3.5) {};
		\node [style={vert_black}] (7) at (-2.5, -1.25) {};
		\node [style={vert_black}] (8) at (2.5, -1.25) {};
		\node [style={vert_black}] (9) at (3.75, 3.5) {};
		\node [style={vert_dest}] (10) at (0, 9) {};
		\node [style={vert_dest}] (11) at (6, 4) {};
		\node [style={vert_dest}] (12) at (-3.75, 3.5) {};
		\node [style={vert_dest}] (13) at (-2.5, -1.25) {};
		\node [style={vert_dest}] (14) at (2.5, -1.25) {};
		\node [style={vert_dest}] (15) at (-3.5, -4) {};
	
		\draw (0) to (5);
		\draw (5) to (8);
		\draw (8) to (6);
		\draw (6) to (9);
		\draw (9) to (7);
		\draw (7) to (5);
		\draw (0) to (2);
		\draw (2) to (9);
		\draw (0) to (1);
		\draw (1) to (6);
		\draw (1) to (3);
		\draw (3) to (4);
		\draw (4) to (2);
		\draw (8) to (4);
		\draw (7) to (3);
	
\end{tikzpicture}
\caption{The vertices indicated with a red circle form a 
2-distance mutual visibility set with maximum cardinal.}\label{fig:peter}
\end{figure}
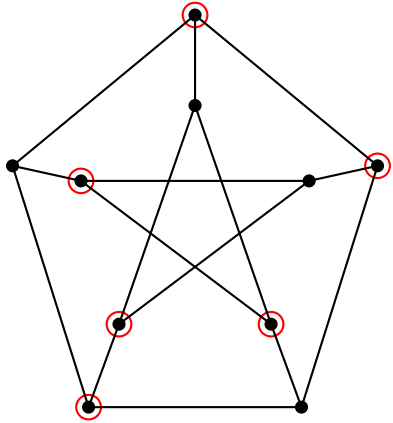

\begin{theorem}\label{eveng}
    Let $G$ be a graph with $n$ vertices and even girth $g\ge 6$. If $S$ is a largest $g/2$-distance 
    mutual-visibility set of $G$ and $G[S]$ contains some edge, then 
    $$
        \displaystyle |S|\le n-\left\lceil\frac{\sqrt{(g-2)^2(g-4)^2+(4n-8)(g-2)(g-4)}-(g-2)(g-4)}{2}\right\rceil.
    $$
\end{theorem}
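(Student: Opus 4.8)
The plan is to reduce the awkward ceiling expression to a clean quadratic inequality and prove that. Write $t=n-|S|$ (the number of vertices \emph{outside} $S$) and abbreviate $a=(g-2)(g-4)$. Since $a>0$ and $t\ge 0$, the inequality $t\ge \tfrac12\big(\sqrt{a^2+(4n-8)a}-a\big)$ is equivalent, after moving the $-a/2$ across and squaring the nonnegative quantity $2t+a$, to $(2t+a)^2\ge a^2+(4n-8)a$, i.e. to $t^2+at\ge (n-2)a$, i.e. to
$$t^2\ge a(n-2-t)=(g-2)(g-4)\,(|S|-2).$$
As $t$ is an integer, $t\ge\lceil\tfrac12(\sqrt{a^2+(4n-8)a}-a)\rceil$, which is exactly the asserted bound $|S|=n-t\le n-\lceil\cdots\rceil$. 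Hence it suffices to prove the single clean inequality $(n-|S|)^2\ge (g-2)(g-4)(|S|-2)$, and the rest of the proof will be devoted to this.

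Next I would set up the combinatorial structure. First, exactly as in the proof of Theorem~\ref{oddg}, $G[S]$ contains no $P_3$: if $uvw$ were a path inside $S$, then since $g\ge 6$ we have $d_G(u,w)=2$ with $v$ its unique common neighbour, so every shortest $u,w$-path runs through $v\in S$ and $u,w$ fail to be $S_{g/2}$-visible. Thus $G[S]$ is a disjoint union of edges and isolated vertices. Fix an edge $uv\subseteq S$, which exists by hypothesis, and let $R=S\setminus\{u,v\}$, so $|R|=|S|-2$. For each $z\in R$, visibility supplies a shortest $z,u$-path $P_z$ and a shortest $z,v$-path $Q_z$, each of length at most $g/2$ and with all internal vertices in $T:=V(G)\setminus S$. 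I would then show $P_z\cap Q_z=\{z\}$: taking $w\in P_z\cap Q_z$ at maximum distance from $z$, the sub-paths $w\to u$, $w\to v$ together with the edge $uv$ form a cycle of length $d_G(z,u)+d_G(z,v)-2d_G(z,w)+1\ge g$, and since $d_G(z,u)+d_G(z,v)\le g$ this forces $d_G(z,w)=0$. Consequently $P_z+uv+Q_z$ is a genuine cycle of length $d_G(z,u)+d_G(z,v)+1\ge g$, so $d_G(z,u)+d_G(z,v)\in\{g-1,g\}$; in particular the combined $u$-$z$-$v$ path carries at least $g-3$ internal vertices, all lying in $T$.

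The crux — and the step I expect to be the main obstacle — is the counting argument turning this into $(n-|S|)^2\ge(g-2)(g-4)(|S|-2)$. The idea is to associate to each $z\in R$ the internal $T$-vertices of $P_z\cup Q_z$, or more precisely suitable ordered pairs of them, and to bound how much the structures for two distinct $z,z'\in R$ can overlap: if $P_z\cup Q_z$ and $P_{z'}\cup Q_{z'}$ shared a long common sub-segment, splicing the two path systems would produce a cycle shorter than $g$, which is forbidden. Balancing the number of such path systems (one per element of $R$) against the $t^2$ ordered pairs available in $T$ then yields the desired quadratic bound. The genuinely delicate point is extracting the \emph{exact} constant $(g-2)(g-4)$: this seems to require separating the two distance cases $d_G(z,u)+d_G(z,v)=g$ and $=g-1$ and a careful analysis, via the girth, of precisely which pairs of internal vertices two systems may share without closing a cycle of length below $g$. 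Once that overlap bound is in hand, the inequality $t^2\ge (g-2)(g-4)(|S|-2)$ follows, and by the reduction of the first paragraph the theorem is proved.

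I would remark that the hypothesis that $G[S]$ contains an edge is used essentially, since the anchor edge $uv$ is what pins every other selected vertex to distance about $g/2$ from a fixed pair and thereby forces the long internal paths into $T$; without it the counting degenerates.
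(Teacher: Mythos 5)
Your opening reduction is correct and matches what the paper needs: writing $t=n-|S|$ and $a=(g-2)(g-4)$, the ceiling bound is equivalent (for integer $t$) to the clean inequality $t^2\ge a(|S|-2)$, and your structural setup (no $P_3$ inside $S$, so $G[S]$ is a union of isolated edges and vertices; for the anchor edge $uv$ and each $z\in S\setminus\{u,v\}$, $d_G(z,u)+d_G(z,v)\in\{g-1,g\}$ with both shortest paths internally in $V(G)\setminus S$) also agrees with the paper's proof. But the argument stops exactly where the theorem actually lives. You label the counting step ``the main obstacle,'' sketch an idea (associate ordered pairs of internal $T$-vertices to each $z$, bound how much two path systems can overlap), and then write ``once that overlap bound is in hand, the inequality follows.'' Nothing in your text produces $(n-|S|)^2\ge(g-2)(g-4)(|S|-2)$; moreover, the route you sketch is unlikely to yield that constant, since the pool of ordered pairs of $T$-vertices already has size $t^2$, so injectivity of $z\mapsto(\text{pair of vertices})$ alone gives no leverage, and the deferred ``overlap analysis'' is precisely the unproved content. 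This is a genuine gap.

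For comparison, the paper's mechanism counts whole paths as atomic objects, not vertices or vertex pairs. For each $z\in S\setminus\{u,v\}$, the internal vertices of the shortest $z,u$-path and the shortest $z,v$-path form two paths inside $V(G)\setminus S$, one on $g/2-1$ vertices and one on $g/2-2$ (or $g/2-1$) vertices; let $P_z$ be this pair of paths. If $P_z=P_w$ for $z\ne w$, a $4$-cycle appears, contradicting $g\ge 6$, so $z\mapsto P_z$ is injective and hence $|S|-2\le rt$, where $r$ (resp.\ $t$) is the number of available paths on $g/2-1$ (resp.\ $g/2-2$) vertices in $V(G)\setminus S$. The vertex budget gives the linear constraint $(g/2-1)r+(g/2-2)t\le n-|S|$, and maximizing the product $rt$ under it yields
$$rt\le\frac{(n-|S|)^2}{4\left(\frac{g}{2}-1\right)\left(\frac{g}{2}-2\right)}=\frac{(n-|S|)^2}{(g-2)(g-4)},$$
which is the quadratic inequality you reduced to. So the constant $(g-2)(g-4)$ does not come from a delicate case analysis of overlapping systems, as you anticipated, but from a constrained product maximization over path counts weighted by their vertex cost. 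That is the missing idea in your proposal.
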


\begin{proof}
Let $S$ be a largest $g/2$-distance mutual-visibility set of $G$. Observe that there is not a path with vertices $u,v,w$ in $G$ 
such that $\{u,v,w\}\subseteq S$, because in such a case, from this path and a shortest $u,w$-path of length at most $g/2$ with the internal vertices not in $S$, we can find a cycle of length at most $g/2+2<g$, due to $g\ge 6$. 
Thus, as the induced subgraph $G[S]$ contains some edge, we deduce that $G[S]$ is formed by some isolated edges and some isolated vertices.

Consider any edge $uv\in E(G)$, with $\{u,v\}\subseteq S$. Then every vertex $z\in S\setminus\{u,v\}$ must be at 
distance $g/2$ from at least one of vertices $u$ and $v$ and at distance at least $g/2-1$ from the other one. That 
is, for every $z\in S\setminus\{u,v\}$, there is a shortest $z,u$-path and a shortest $z,v$-path, one of them of length $g/2$ and the 
other one of length at least $g/2-1$, with the internal vertices in $V(G)\setminus S$. Thus, the internal elements 
of the $z,u$-path and the $z,v$-path are either a path with $g/2-2$ vertices and a path with $g/2-1$ vertices, or two 
paths with $g/2-1$ vertices. Denote by $P_z$ the pair of internal elements of the shortest $z,u$-path and the shortest $z,v$-path. 
If $P_z=P_w$ for two vertices $z,w\in S\setminus\{u,v\}$, then a cycle of length 4 appears, and this is a contradiction.
Therefore, in the most favorable case, for every $z\in S\setminus\{u,v\}$ we must find (in $V(G)\setminus S$) a path 
with $g/2-2$ vertices and a path with $g/2-1$ vertices, and this pair of elements must be different for each vertex 
$z\in S\setminus\{u,v\}$. In other words, if we denote by $r$ the number of paths with $g/2-1$ vertices, and by $t$ 
the number of paths with $g/2-2$ vertices in $V(G)\setminus S$, which may be internal vertices in the shortest $z,u$-path and in the shortest $z,v$-path, for each $z\in S\setminus\{u,v\}$, then the number of different $P_z$ which we can find is $rt$. 
Thus, the inequality $|S|-2\le rt$ must hold.

As a consequence, we need to maximize $f(r,t)=rt$ with restriction $(g/2-1)r+(g/2-2)t\le n-|S|$. The solution to this 
problem is $r=(n-|S|)/(g-2)$ and $t=(n-|S|)/(g-4)$, so that $\displaystyle |S|-2\le \frac{n-|S|}{g-2}\cdot
\frac{n-|S|}{g-4}$, yielding that $$\displaystyle |S|\le n-\left\lceil\frac{\sqrt{(g-2)^2(g-4)^2+(4n-8)(g-2)(g-4)}-
(g-2)(g-4)}{2}\right\rceil.$$
\end{proof}

\section{Concluding remarks}
We have initiated in this article the study of a ``limited'' version of the mutual-visibility problem for graphs, where such limitation is understood in the sense of the length of the paths used while searching for the visibility properties. We next include some possible research lines that might be of interest as a continuation of this investigation.
\begin{itemize}
    \item Find structural properties of the graphs $G$ of diameter $d$ satisfying that $\mu_k(G)\ge \max\{\mu(H)\,:\,H\in\mathcal{G}^k\}$ for some $k\in [d]$.

    \item Propositions \ref{prop:strong-P_r-P_2} and \ref{prop:strong-P_r-K_s} suggest studying the $k$-distance mutual-visibility number of other families of strong product graphs, as well as, the lexicographic product case, since indeed the strong product $P_r\boxtimes K_s$ can be also seen as the lexicographic product $P_r\circ K_s$.
    
    \item To characterize the extremal graphs attaining the upper bounds given by Theorems \ref{oddg} and \ref{eveng} is still open.

    \item Studying the $k$-distance mutual-visibility number of grid graphs $P_r\Box P_t$. Is it the case that such graphs satisfy the equality in the bound of Theorem \ref{th:subgraphs} (for at least large enough values of $r$ and $t$)?

    \item Theorem \ref{th:corona} suggest considering the study of the $k$-distance mutual-visibility number of corona graphs in general.
\end{itemize}

\section*{Acknowledgments}

I.\ G.\ Yero has been partially supported by the Spanish Ministry of Science and Innovation through the grant 
PID2019-105824GB-I00. Moreover, this investigation was made while this author (I.G. Yero) was making a temporary 
stay at ``Universitat Rovira i Virgili'' supported by the program ``Ayudas para la recualificaci\'on del sistema 
universitario espa\~{n}ol para 2021-2023, en el marco del Real Decreto 289/2021, de 20 de abril de 2021''.

\section*{Author contributions statement} 
All authors contributed equally to this work.

\section*{Conflicts of interest} 
The authors declare no conflict of interest.

\section*{Data availability} 
No data was used in this investigation.

\end{document}